 \newtheorem{thm}{Theorem}[section]
 \newtheorem{cor}[thm]{Corollary}
 \newtheorem{prop}[thm]{Proposition}
 \theoremstyle{definition}
 \newtheorem{defn}[thm]{Definition}
 \theoremstyle{remark}
 \newtheorem{rem}[thm]{Remark}
 \newtheorem{ex}[thm]{Example}
 \numberwithin{equation}{section}
\newcommand{\NN}{\mathbb{N}}
\begin{document}

\title[Three point analogue of  \'Ciri\'c-Reich-Rus type mappings]{Three point analogue of  \'Ciri\'c-Reich-Rus type mappings with non-unique fixed points}


\author[1]{\fnm{Ravindra K.} \sur{Bisht}}\email{ravindra.bisht@yahoo.com}

\author*[2]{\fnm{Evgeniy} \sur{Petrov}}\email{eugeniy.petrov@gmail.com}

\affil[1]{
\orgdiv{Department of Mathematics},
\orgname{National Defence Academy},
\orgaddress{
\city{Pune},
\state{Khadakwasla},
\country{India}}}

\affil*[2]{
\orgdiv{Function Theory Department},
\orgname{Institute of Applied Mathematics and Mechanics of the NAS of Ukraine}, \orgaddress{\street{Batiuka str. 19},
\city{Slovyansk},
\postcode{84116},
\country{Ukraine}}}



\abstract{In this paper, we introduce a three-point analogue of \'Ciri\'c-Reich-Rus type mappings, termed as generalized \'Ciri\'c-Reich-Rus type mappings. We demonstrate that these mappings generally exhibit discontinuity within their domain of definition but necessitate continuity at their fixed points. We showcase the existence and non-uniqueness of fixed points for these generalized \'Ciri\'c-Reich-Rus type mappings. By imposing additional conditions, specifically asymptotic regularity and continuity, we extend the applicability of fixed-point theorems to a broader class of mappings. Finally, we obtain two fixed point theorems for generalized \'Ciri\'c-Reich-Rus type mappings in metric spaces that are not necessarily complete.}

\keywords{fixed point theorem,  \'Ciri\'c-Reich-Rus type mapping, metric space, asymptotic regularity}


\pacs[MSC Classification]{Primary 47H10; Secondary 47H09}

\maketitle

\section{Introduction}

In 1971, independently, \'Ciri\'c~\cite{CB71}, Reich~\cite{Re71} and Rus~\cite{Ru71} extended the Kannan fixed point theorem to cover a broader class of mappings.  Essentially, Reich ~\cite{Re71} established the following result which gives the fixed point for a self discontinuous mapping: Let $T\colon X\to X$ be a mapping on a complete metric space $(X,d)$ with
  \begin{equation}\label{e0}
   d(Tx,Ty)\leqslant a d(x,y)+ bd(x,Tx)+ cd(y,Ty)
  \end{equation}
where $a,b, c\geq 0, a+b+c<1$ and $x,y \in X$. Then $T$ has a unique fixed point.

In what follows, we will refer to the mapping (\ref{e0}) as the \'Ciri\'c-Reich-Rus (\'CRR) mapping. This theorem integrates principles from both the Banach contraction principle (by choosing $b=c=0$) and the Kannan fixed point theorem \cite{Ka68,Re712} with $a=0, b=c=\lambda$. This synthesis provides a unified framework that encompasses and extends the key ideas from these established theorems.

In fixed point theory, major generalizations of fixed point results involve weakening contractivity~\cite{Bo12,Ki03,Ci74,Bw69,MK69,Ra62, Re71,Re72,Rh77,Rh88,Wa12,BPR23,Su18,Pr20,Po21,Be04,Be07}; relaxing topology assumptions or metric spaces ~\cite{CJM21,DBC12,Ba00,KKR90,SRR09,Tu12,JKR11,Co59,KI19}, and extending theorems to deal with multi-valued mappings \cite{AJS18,Na69,L2010,NS10,DD11}. These adaptations broaden the applicability of fixed point theorems across diverse mathematical contexts.

In~\cite{P23}, Petrov initiated a novel class of self-mappings characterized by the contraction of perimeters of triangles.
\begin{defn}[\!\cite{P23}]\label{d0}
Let $(X,d)$ be a metric space with $|X|\geqslant 3$. We say that $T\colon X\to X$ is a \emph{mapping contracting perimeters of triangles} on $X$ if there exists $\alpha\in [0,1)$ such that the inequality
  \begin{equation}\label{mcpt}
   d(Tx,Ty)+d(Ty,Tz)+d(Tx,Tz) \leqslant \alpha (d(x,y)+d(y,z)+d(x,z))
  \end{equation}
  holds for all three pairwise distinct points $x,y,z \in X$.
\end{defn}

In addition to the distinctive feature of mapping three points instead of the conventional two, a crucial condition was emphasized to prevent the mapping from having points with the least period two. Furthermore, instead of ensuring the uniqueness of the fixed point, it guarantees the existence of exactly two fixed points. The Banach contraction principle significantly reduces to a noteworthy subclass within this category of mappings.

\begin{rem}
It is crucial to note that the requirement for $x, y, z \in X$ to be pairwise distinct is essential in Definition \ref{d0}. Without this condition, one can observe that the definition becomes equivalent to that of a contraction mapping.
\end{rem}

In \cite{PB23}, the authors introduced a three point analogue of Kannan type mappings, specifically referred to as the generalized Kannan type mappings, and established several fixed point theorems under various conditions. It is crucial to emphasize that these generalized Kannan type mappings are independent from the conventional Kannan type mappings.

\begin{defn}[\!\cite{PB23}]\label{d1}
Let $(X,d)$ be a metric space with $|X|\geqslant 3$. We say that $T\colon X\to X$ is a \emph{generalized Kannan type mapping} on $X$ if there exists $\lambda\in [0,\frac{2}{3})$ such that the inequality
  \begin{equation}\label{e1}
   d(Tx,Ty)+d(Ty,Tz)+d(Tx,Tz) \leqslant \lambda (d(x,Tx)+d(y,Ty)+d(z,Tz))
  \end{equation}
  holds for all three pairwise distinct points $x,y,z \in X$.
\end{defn}

In the next definition, we introduce three-point analogue of the \'CRR type mappings.
\begin{defn}\label{d2}
Let $(X,d)$ be a metric space with $|X|\geqslant 3$. We shall say that $T\colon X\to X$ is a \emph{generalized \'CRR type mapping} on $X$ if there exists $\alpha, \lambda \geq 0, 2\alpha+\frac{3\lambda}{2}< 1$ such that the inequality
  \begin{equation}\label{e00}
  \begin{aligned}
    &d(Tx,Ty) + d(Ty,Tz) + d(Tx,Tz) \\
    &\leqslant \alpha (d(x,y) + d(y,z) + d(z,x)) \\
    &\quad + \lambda (d(x,Tx) + d(y,Ty) + d(z,Tz))
  \end{aligned}
\end{equation}
holds for all three pairwise distinct points $x,y,z \in X$.
\end{defn}

The fixed-point theorem for such mappings is built upon the proof strategy essentially used in \'CRR's fixed point theorem. However, a fundamental difference lies in the definition of these mappings, which involves the mapping of three points in the space, as opposed to the conventional two-point mapping. Furthermore, a condition is introduced to preclude these mappings from having periodic points of prime period 2.

In Section~\ref{sec2}, we study connection between generalized \'CRR  type mappings and \'CRR type mappings. Additionally, we provide an example of a discontinuous generalized \'CRR type mapping.

In Section~\ref{sec3}, we prove the main result of this paper Theorem~\ref{t1}, which is a fixed point theorem for generalized \'CRR  type mappings. It is noteworthy that this theorem asserts that the number of fixed points is at most two. Furthermore, it is demonstrated that the generalized \'CRR type mappings are continuous at fixed points.

In Section~\ref{sec4}, we explore fixed point theorems for asymptotically regular generalized \'CRR-type mappings. The condition of asymptotic regularity facilitates an extension of the requirement for the positive parameter $2\alpha + \frac{3\lambda}{2} < 1$ in~(\ref{e00}) to $\alpha \in [0, \frac{1}{2})$ and $\lambda \in [0,1)$, as demonstrated in Theorem~\ref{gkt1m}. The additional condition of continuity enables the derivation of a fixed point theorem applicable to the class of generalized $F$-\'CRR-type mappings, as presented in Theorem~\ref{gkt2m}. Furthermore, we extend the permissible values of the parameter $\lambda$ to the set $[0, \infty)$, as outlined in Corollary~\ref{cgkt2m}.

In Section~\ref{sec5}, similar to the results proved in~\cite{Ka69}, we obtain two other fixed point theorems for generalized \'CRR type mappings. In the first scenario, we relax the requirement for the completeness of the metric space $X$. We assume that $T\colon X\to X$ is continuous at a specific point $x^*\in X$, and there exists a point $x_0\in X$ such that the sequence of iterates $x_n=Tx_{n-1}$, $n=1,2,...$, possesses a sub-sequence $x_{n_k}$ converging to $x^*$, as stated in Theorem~\ref{t2}. In the second scenario, we suppose the additional condition that the mapping $T$ is continuous throughout the entire space, not just at the point $x_0$. Furthermore, condition~(\ref{e1}) is required only on an everywhere dense subset of the space, as presented in Theorem~\ref{t3}.

\section{Some properties of generalized \'CRR type mappings}\label{sec2}

In the next result, we delve into the connections between generalized \'CRR-type mappings and \'CRR-type mappings.

\begin{prop}\label{p21}
\'CRR type mappings with $2a+\frac{3}{2}(b+c)<1$ are generalized \'CRR type mappings.
\end{prop}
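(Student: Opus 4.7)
The plan is to apply the \'CRR inequality to each of the three pairs of points in $\{x,y,z\}$ and sum. The only subtle point is that a direct summation produces an asymmetric right-hand side (the coefficients of $d(x,Tx)$, $d(y,Ty)$, $d(z,Tz)$ come out as $2b$, $b+c$, $2c$), whereas the generalized \'CRR inequality~(\ref{e00}) demands the same coefficient $\lambda$ in front of all three. To fix this, I would first symmetrize the \'CRR inequality using $d(Tu,Tv)=d(Tv,Tu)$.

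Specifically, for any pair $u,v\in X$, condition~(\ref{e0}) gives both
\begin{equation*}
d(Tu,Tv)\leqslant a\, d(u,v) + b\, d(u,Tu) + c\, d(v,Tv)
\end{equation*}
and, swapping $u$ and $v$,
\begin{equation*}
d(Tu,Tv)\leqslant a\, d(u,v) + b\, d(v,Tv) + c\, d(u,Tu).
\end{equation*}
Averaging these two yields the symmetric form
\begin{equation*}
d(Tu,Tv)\leqslant a\, d(u,v) + \tfrac{b+c}{2}\bigl(d(u,Tu)+d(v,Tv)\bigr).
\end{equation*}

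Next, I would apply this symmetric inequality to the three pairs $(x,y)$, $(y,z)$, $(x,z)$ (all valid since the three points are pairwise distinct, though the \'CRR inequality holds for any two points anyway) and sum. Each of the three quantities $d(x,Tx)$, $d(y,Ty)$, $d(z,Tz)$ then appears in exactly two of the three inequalities, contributing a total coefficient of $2\cdot\tfrac{b+c}{2}=b+c$. This gives
\begin{equation*}
\begin{aligned}
&d(Tx,Ty)+d(Ty,Tz)+d(Tx,Tz) \\
&\leqslant a\bigl(d(x,y)+d(y,z)+d(x,z)\bigr) + (b+c)\bigl(d(x,Tx)+d(y,Ty)+d(z,Tz)\bigr).
\end{aligned}
\end{equation*}
Finally I would set $\alpha:=a$ and $\lambda:=b+c$; the hypothesis $2a+\tfrac{3}{2}(b+c)<1$ is exactly $2\alpha+\tfrac{3\lambda}{2}<1$, so $T$ fits Definition~\ref{d2}.

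The main obstacle is really just spotting the symmetrization step: without it, the naive sum forces one to take $\lambda=\max(2b,\,b+c,\,2c)$, which in general exceeds $b+c$ and could violate $2\alpha+\tfrac{3\lambda}{2}<1$ (for instance when $b$ and $c$ are very unequal). The symmetry $d(Tu,Tv)=d(Tv,Tu)$ is the essential ingredient that allows the cleaner constant $\lambda=b+c$ to be extracted and hence ensures the hypothesis $2a+\tfrac{3}{2}(b+c)<1$ is exactly what is needed, with no slack lost.
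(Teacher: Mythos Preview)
Your proof is correct and reaches exactly the same final inequality as the paper, with $\alpha=a$ and $\lambda=b+c$. The only difference is that your symmetrization step is unnecessary: the paper applies~(\ref{e0}) \emph{cyclically} to the ordered pairs $(x,y)$, $(y,z)$, $(z,x)$ rather than to $(x,y)$, $(y,z)$, $(x,z)$, and in that cyclic ordering each point occurs once in the ``$b$-slot'' and once in the ``$c$-slot'', so the direct sum already produces the symmetric coefficient $b+c$ on each of $d(x,Tx)$, $d(y,Ty)$, $d(z,Tz)$. Your averaging trick is a perfectly valid workaround, just slightly more laborious than noticing the cyclic choice.
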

\begin{proof}
Let $(X,d)$ be a metric space with $|X|\geqslant 3$, $T\colon X\to X$ be a \'CRR type mapping with $2a+\frac{3}{2}(b+c)<1$. First of all note that the last inequality implies $a+b+c<1$, i.e., $T$ is indeed \'CRR type mapping. Let $x,y,z\in X$ be pairwise distinct. Consider inequality~(\ref{e0}) for the pairs $y, z$, and $z, x$ and:
  \begin{equation}\label{e02}
   d(Ty,Tz)\leqslant  ad(y,z) +bd(y,Ty)+cd(z,Tz),
  \end{equation}
  \begin{equation}\label{e01}
   d(Tz,Tx)\leqslant  ad(z,x)+bd(z,Tz)+cd(x,Tx).
  \end{equation}
Summarizing the left and the right parts of inequalities~(\ref{e0}), ~(\ref{e02}) and~(\ref{e01}) we obtain
\begin{equation*}
\begin{aligned}
   &d(Tx,Ty) + d(Ty,Tz) + d(Tx,Tz) \\
   &\leqslant a(d(x,y) + d(y,z) + d(z,x)) + (b+c)(d(x,Tx) + d(y,Ty) + d(z,Tz)).
\end{aligned}
\end{equation*}
Hence, we get the desired assertion.
\end{proof}

\begin{ex}
Let us construct an example of discontinuous generalized \'CRR type mapping. Let $X=[0,1]$,  $d$ be the Euclidean distance on $X$, $T\colon X\to X$ be a discontinuous mapping such that $Tx=\frac{x}{4}$ for all $x\in[0,1)$ and $T(1)=\frac{1}{8}$. Let us show first that $T$ is a \'CRR type mapping with the coefficients $a=\frac{1}{4}$, $b=c=\frac{1}{8}$. It is easy to see that~(\ref{e0}) holds for all $x,y\in [0,1)$. Without loss of generality consider that $x\in [0,1)$ and $y=1$. Consider~(\ref{e0}) for such $x$ and $y$:
$$
\left| \frac{x}{4}-\frac{1}{8}\right|\leqslant
\frac{1}{4}\left(1-x\right)+\frac{1}{8}\left(x-\frac{x}{4}\right)+\frac{1}{8}\left(1-\frac{1}{8}\right).
$$
Multiplying both parts on $64$ and simplifying, we get
$$
|16x-8|\leqslant 23-10x.
$$
If $x\in [0,\frac{1}{2})$, then we get $8-16x\leqslant 23-10x$ or $0\leqslant 6x+15$, which clearly holds. If $x\in [\frac{1}{2},1)$, then we get $16x-8\leqslant 23-10x$ or $26x\leqslant 31$, which clearly holds for such $x$. Thus,~(\ref{e0}) holds for all $x,y\in [0,1]$ and $T$ is a \'CRR type mapping. Further,
$$
2a+\frac{3}{2}(b+c)=2\frac{1}{4}+\frac{3}{2}\left(\frac{1}{8}+\frac{1}{8}\right)=\frac{1}{2}+\frac{3}{8}<1.
$$
By Proposition~\ref{p21} $T$ is a generalized \'CRR type mapping.
\end{ex}

\begin{ex}
Let us construct an example of generalized \'CRR type mapping, which is not a \'CRR type mapping. Let $X=\{x,y,z\}$, $d(x,y)=1$, $d(x,z)=10$, $d(y,z)=10$ and let $T\colon X\to X$ be such that $Tx=x$, $Ty=y$, $Tz=x$. It is clear that~(\ref{e00}) holds with the coefficients $\alpha= \frac{1}{3}$ and $\lambda = 0$, but~(\ref{e0}) does not hold for any $a<1$ since $x$ and $y$ are two fixed points of $T$.
\end{ex}

\begin{ex}
Let $X=[0,1]$, $d$ be the Euclidean distance on $X$, $T\colon X\to X$ be such that $Tx=\frac{9x}{10}$ for all $x\in[0,1]$. Let us show that $T$ is a \'CRR type mapping but not a generalized \'CRR type mapping. Clearly~(\ref{e0}) holds with $a = \frac{9}{10}$, $b=c=0$, $a+b+c<1$ for all $x,y \in X$:
$$
\frac{9}{10}|x-y|\leqslant \frac{9}{10}|x-y|+0\left|x-\frac{9}{10}x\right|+0\left|y-\frac{9}{10}y\right|.
$$
Consider~(\ref{e00}):
$$
\frac{9}{10}(|x-y|+|y-z|+|x-z|)\leqslant \alpha (|x-y|+|y-z|+|x-z|)+\lambda \left(\frac{x}{10}+\frac{y}{10}+\frac{z}{10}\right).
$$
It is clear that this inequality does not hold even with $\alpha =\frac{1}{2}$ and $\lambda =\frac{2}{3}$, e.g., for the points $x=0$, $y=\frac{1}{2}$, $z=1$.
\end{ex}

Thus, the previous two examples show that the classes of \'CRR type mappings and generalized \'CRR type mappings are independent.

\begin{prop}\label{p1}
Let $(X,d)$ be a metric space and let  $T\colon X\to X$ be a generalized \'CRR type metric with some  $\alpha, \lambda\geqslant 0$. If $x$ is an accumulation point of  $X$ and $T$ is continuous at $x$, then the inequality
\begin{equation}\label{w1}
d(Tx,Ty)  \leqslant \alpha d(x,y)+ \lambda \left(d(x,Tx) + \frac{d(y,Ty)}{2}\right)
\end{equation}
holds for all points $y\in X$.
\end{prop}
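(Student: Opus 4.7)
The plan is to leverage the accumulation point hypothesis to construct a sequence approaching $x$, substitute it as the third variable in the defining three-point inequality~(\ref{e00}), and pass to the limit using continuity of $T$ at $x$ to collapse the three-point inequality into a two-point inequality.

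First I would dispose of the trivial case $y=x$: the left-hand side of~(\ref{w1}) vanishes while the right-hand side is non-negative. So assume $y\neq x$. Since $x$ is an accumulation point of $X$, there exists a sequence $(z_n)\subset X\setminus\{x\}$ with $z_n\to x$. Because $y\neq x$, the continuity of $d(\cdot,y)$ gives $d(z_n,y)\to d(x,y)>0$, so for all sufficiently large $n$ the three points $x,y,z_n$ are pairwise distinct, and we may apply~(\ref{e00}) to them:
\begin{equation*}
\begin{aligned}
d(Tx,Ty)+d(Ty,Tz_n)+d(Tx,Tz_n)
&\leqslant \alpha\bigl(d(x,y)+d(y,z_n)+d(z_n,x)\bigr)\\
&\quad + \lambda\bigl(d(x,Tx)+d(y,Ty)+d(z_n,Tz_n)\bigr).
\end{aligned}
\end{equation*}

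Next I would pass to the limit as $n\to\infty$. Continuity of $T$ at $x$ yields $Tz_n\to Tx$, and combining with $z_n\to x$ and the continuity of the metric we get $d(Tx,Tz_n)\to 0$, $d(Ty,Tz_n)\to d(Ty,Tx)$, $d(z_n,x)\to 0$, $d(y,z_n)\to d(y,x)$, and $d(z_n,Tz_n)\to d(x,Tx)$. Substituting these limits into the inequality above gives
\begin{equation*}
2\,d(Tx,Ty)\leqslant 2\alpha\, d(x,y)+\lambda\bigl(2\,d(x,Tx)+d(y,Ty)\bigr),
\end{equation*}
and dividing by $2$ yields~(\ref{w1}).

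There is essentially no serious obstacle here — the argument is a standard limiting trick. The only point requiring a bit of care is ensuring the three points remain pairwise distinct so that~(\ref{e00}) actually applies; this is handled by discarding the trivial case $y=x$ and throwing away finitely many initial terms of $(z_n)$. Everything else is a routine continuity and arithmetic manipulation.
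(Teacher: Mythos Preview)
Your proof is correct and follows essentially the same route as the paper's own argument: handle the trivial case $y=x$, pick a sequence $z_n\to x$ with $z_n\neq x$ and (eventually) $z_n\neq y$, apply the three-point inequality~(\ref{e00}) to $x,y,z_n$, and pass to the limit using continuity of $T$ at $x$. Your justification for why $z_n\neq y$ eventually is slightly more explicit than the paper's, but otherwise the arguments are identical.
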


\begin{proof}
Let $x\in X$ be an accumulation point and let $y\in X$. If $y=x$, then clearly~(\ref{w1}) holds. Let now $y\neq x$. Since $x$ is an accumulation point, then there exists a sequence $z_n\to x$ such that $z_n\neq x$, $z_n\neq y$ and all $z_n$ are different.
Hence, by~(\ref{e00}) the inequality
  \begin{equation*}
  \begin{aligned}
    &d(Tx,Ty) + d(Ty,Tz_n) + d(Tx,Tz_n) \\
    &\leqslant \alpha (d(x,y) + d(y,z_n) + d(z_n,x)) \\
    &\quad + \lambda (d(x,Tx) + d(y,Ty) + d(z_n,Tz_n))
  \end{aligned}
\end{equation*}
holds for all $n\in \NN$. Since $z_n\to x$ and  $T$ is continuous at $x$ we have $Tz_n \to Tx$. Since every metric is continuous we have $d(z_n, Tz_n) \to d(x,Tx)$. Hence, we get
\begin{equation*}
d(Tx,Ty) + d(Ty,Tx) \leqslant \alpha (d(x,y) + d(y,x))+ \lambda (d(x,Tx) + d(y,Ty) + d(x,Tx)),
\end{equation*}
which is equivalent to~(\ref{w1}).
\end{proof}

\begin{cor}\label{cor1}
Let $(X,d)$ be a metric space, $T\colon X\to X$ be a continuous generalized \'CRR type mapping with some $\alpha, \lambda \geqslant 0$ and let all points of $X$ are accumulation points. Then $T$ is a \'CRR type mapping with $b=c$.
\end{cor}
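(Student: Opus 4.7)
The plan is to invoke Proposition~\ref{p1} twice, once in each order of the two variables, and then symmetrize by averaging. Since all points of $X$ are accumulation points and $T$ is continuous everywhere, Proposition~\ref{p1} applies to every $x\in X$, so inequality~(\ref{w1}) holds for every pair $x,y\in X$:
\begin{equation*}
d(Tx,Ty) \leqslant \alpha d(x,y) + \lambda d(x,Tx) + \frac{\lambda}{2} d(y,Ty).
\end{equation*}
Because $y$ is also an accumulation point, the same proposition yields the twin inequality with the roles of $x$ and $y$ exchanged,
\begin{equation*}
d(Ty,Tx) \leqslant \alpha d(y,x) + \lambda d(y,Ty) + \frac{\lambda}{2} d(x,Tx).
\end{equation*}

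Adding these two inequalities and dividing by $2$ gives
\begin{equation*}
d(Tx,Ty) \leqslant \alpha\, d(x,y) + \frac{3\lambda}{4} d(x,Tx) + \frac{3\lambda}{4} d(y,Ty),
\end{equation*}
which is exactly~(\ref{e0}) with coefficients $a=\alpha$ and $b=c=\tfrac{3\lambda}{4}$. To conclude that $T$ is a genuine \'CRR type mapping it remains to check that $a+b+c<1$. This follows from the standing assumption $2\alpha+\tfrac{3\lambda}{2}<1$ in Definition~\ref{d2}, since
\begin{equation*}
a+b+c = \alpha + \frac{3\lambda}{2} \leqslant 2\alpha + \frac{3\lambda}{2} < 1.
\end{equation*}

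The only potential obstacle is making sure Proposition~\ref{p1} is symmetric in its inputs in the sense needed here; but its hypothesis is only that the point in the first slot be an accumulation point at which $T$ is continuous, so under the assumptions of the corollary it applies equally well to the pair $(x,y)$ and to $(y,x)$. Everything else is an elementary symmetrization, so no additional technical work should be required.
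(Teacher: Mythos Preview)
Your proof is correct and follows essentially the same route as the paper's: apply Proposition~\ref{p1} once with $x$ as the accumulation point, once with $y$, then average to obtain the symmetric inequality with coefficients $a=\alpha$, $b=c=\tfrac{3\lambda}{4}$. Your check that $a+b+c=\alpha+\tfrac{3\lambda}{2}\leqslant 2\alpha+\tfrac{3\lambda}{2}<1$ is the same observation the paper makes, just phrased slightly differently.
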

\begin{proof}
According to Proposition~\ref{p1}, inequality~(\ref{w1}) holds as well as the inequality
\begin{equation}\label{w2}
d(Tx,Ty)  \leqslant \alpha d(x,y)+ \lambda \left(d(y,Ty) + \frac{d(x,Tx)}{2}\right).
\end{equation}
Summarizing the left and the right parts of ~(\ref{w1}) and ~(\ref{w2}) and dividing both parts of the obtained inequality by $2$ we get
\begin{equation}\label{q3}
d(Tx,Ty)  \leqslant \alpha d(x,y)+ \frac{3\lambda}{4} \left(d(y,Ty) + d(x,Tx)\right).
\end{equation}
Since $2\alpha+\frac{3\lambda}{2}\in [0,1)$, we have $\alpha+\frac{3\lambda}{4}\in [0,\frac{1}{2})$ and $\alpha+\frac{3\lambda}{2}\in [0,1)$. Setting $a=\alpha$, $b=\frac{3\lambda}{4}$ we get $a+2b<1$ and comparing~(\ref{q3}) with~(\ref{e0}) we obtain the desired assertion.
\end{proof}

\section{The main result}\label{sec3}

Let $T$ be a mapping on the metric space $X$. A point $x\in X$ is called a \emph{periodic point of period $n$} if $T^n(x) = x$. The least positive integer $n$ for which $T^n(x) = x$ is called the prime period of $x$, see, e.g.,~\cite[p.~18]{De22}. In particular, the point $x$ is of prime period $2$ if $T(T(x))=x$ and $Tx\neq x$.

The following theorem is the main result of this paper.
\begin{thm}\label{t1}
Let $(X,d)$, $|X|\geqslant 3$, be a complete metric space and let the mapping $T\colon X\to X$ satisfy the following two conditions:
\begin{itemize}
  \item [(i)] $T$ does not possess periodic points of prime period $2$, i.e., $T(T(x))\neq x$ for all $x\in X$ such that $Tx\neq x$.
  \item [(ii)] $T$ is a generalized \'CRR type mapping on $X$.
\end{itemize}
Then $T$ has a fixed point. The number of fixed points is at most two.
\end{thm}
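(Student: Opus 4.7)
The plan is to run a Picard iteration adapted to the three-point framework. Fix any $x_0\in X$ and set $x_n=T^n x_0$. If $x_n=x_{n+1}$ for some $n$, then $x_n$ is a fixed point and we are done, so assume $x_n\neq x_{n+1}$ for every $n$. Condition (i) together with the observation that $x_{n-1}=x_{n+1}$ would say $T^2 x_{n-1}=x_{n-1}$ while $Tx_{n-1}\neq x_{n-1}$, forces $x_{n-1},x_n,x_{n+1}$ to be pairwise distinct. This legitimizes applying Definition~\ref{d2} to triples of consecutive iterates.

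The quantity to track is the perimeter $p_n=d(x_n,x_{n+1})+d(x_{n+1},x_{n+2})+d(x_n,x_{n+2})$ of the triangle on three successive iterates. Applying~(\ref{e00}) to the triple $(x_{n-1},x_n,x_{n+1})$ gives
\begin{equation*}
p_n\leqslant \alpha p_{n-1}+\lambda\bigl(d(x_{n-1},x_n)+d(x_n,x_{n+1})+d(x_{n+1},x_{n+2})\bigr).
\end{equation*}
Using the triangle inequality one checks $d(x_{n-1},x_n)+d(x_n,x_{n+1})\leqslant p_{n-1}$, and from the triangle $(x_n,x_{n+1},x_{n+2})$ one has $2d(x_{n+1},x_{n+2})\leqslant p_n$. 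Substituting and collecting terms produces $p_n\leqslant k\, p_{n-1}$ with $k=\frac{2\alpha+2\lambda}{2-\lambda}$. Since $\alpha\geqslant 0$, the hypothesis $2\alpha+\tfrac{3\lambda}{2}<1$ implies $\alpha+\tfrac{3\lambda}{2}<1$, which is exactly $k<1$.

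From $p_n\leqslant k^n p_0$ and $d(x_n,x_{n+1})\leqslant p_n$ it follows that $\sum_n d(x_n,x_{n+1})<\infty$, so $\{x_n\}$ is Cauchy and converges to some $x^*\in X$. To see $x^*$ is fixed, suppose $Tx^*\neq x^*$. Using (i) one rules out that $x^*$ is a periodic point of the orbit of any $x_N$: period $1$ is the fixed-point case, period $2$ is forbidden by (i), and period $k\geqslant 3$ would force the Cauchy tail to take $k$ distinct values periodically, contradicting convergence. Hence $x_n\neq x^*$ for all sufficiently large $n$. Applying~(\ref{e00}) to the pairwise distinct triple $(x^*,x_n,x_{n+1})$ and letting $n\to\infty$ (using continuity of the metric, $x_n\to x^*$, and $d(x_n,x_{n+1})\to 0$), the left side tends to $2\,d(Tx^*,x^*)$ while the right side tends to $\lambda\, d(Tx^*,x^*)$. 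Since $\lambda<\tfrac{2}{3}<2$, we obtain $d(Tx^*,x^*)=0$, a contradiction. Finally, if $p,q,r$ were three distinct fixed points, then~(\ref{e00}) applied to $(p,q,r)$ collapses the $\lambda$ term and leaves $(1-\alpha)\bigl(d(p,q)+d(q,r)+d(p,r)\bigr)\leqslant 0$, which is impossible since $\alpha<\tfrac{1}{2}<1$; hence there are at most two fixed points.

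The main obstacle is squeezing a contraction constant $k<1$ out of the three-term inequality under the sharp parameter condition $2\alpha+\tfrac{3\lambda}{2}<1$. A naive two-point style manipulation gives only $(1-\lambda)(a_n+a_{n+1})\leqslant (2\alpha+\lambda)(a_{n-1}+a_n)+\lambda a_{n+1}$, which is insufficient; recasting the recursion in terms of triangle perimeters and exploiting the in-triangle bound $2\,d(x_{n+1},x_{n+2})\leqslant p_n$ is what produces the right constant. The second delicate point is identifying the limit as a fixed point without any continuity assumption on $T$: the periodic-orbit exclusion above must precede the limit computation so that the three-point inequality is applicable.
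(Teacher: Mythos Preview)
Your proof is correct, and the Cauchy step is actually cleaner than the paper's. The paper works with the consecutive edge lengths $a_n=d(x_{n-1},x_n)$: after applying~(\ref{e00}) to $(x_{n-1},x_n,x_{n+1})$ it uses $d(x_{n-1},x_{n+1})\leqslant a_n+a_{n+1}$ inside the $\alpha$-term and, after some rearrangement, obtains the two-step recursion $a_{n+2}\leqslant\gamma\max\{a_n,a_{n+1}\}$ with $\gamma=\dfrac{2(2\alpha+\lambda)}{2-\lambda}$, whence $a_n\leqslant\gamma^{n/2-1}a$. This $\gamma$ is $<1$ exactly when $2\alpha+\tfrac{3\lambda}{2}<1$, so the paper uses the full strength of the hypothesis. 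Your perimeter variable $p_n$ avoids that doubling of the $\alpha$-term; the resulting one-step recursion $p_n\leqslant k\,p_{n-1}$ with $k=\dfrac{2\alpha+2\lambda}{2-\lambda}$ already has $k<1$ under the weaker condition $\alpha+\tfrac{3\lambda}{2}<1$, and the summability of $d(x_n,x_{n+1})\leqslant p_n$ is immediate. So your route is shorter and, incidentally, shows that the theorem holds under a strictly larger parameter range than stated.

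The identification of the limit as a fixed point and the at-most-two-fixed-points argument are essentially the same as the paper's; the paper writes out an explicit bound~(\ref{e50}) for $d(x^*,Tx^*)$ and lets each term go to zero, whereas you pass to the limit directly in~(\ref{e00}) applied to $(x^*,x_n,x_{n+1})$ --- equivalent reasoning. One small remark: in your periodic-orbit exclusion you do not need to invoke~(i) separately for period~$2$, since any period $\geqslant 2$ already contradicts convergence of the Cauchy sequence; the paper argues in the same way.
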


\begin{proof}
Let $x_0\in X$, $Tx_0=x_1$, $Tx_1=x_2$, \ldots, $Tx_n=x_{n+1}$, \ldots. Suppose that $x_n$ is not a fixed point of the mapping $T$ for every $n=0,1,...$.
Since $x_{n-1}$ is not fixed, then $x_{n-1}\neq x_n=Tx_{n-1}$. By condition (i) $x_{n+1}=T(T(x_{n-1}))\neq x_{n-1}$ and by the supposition that $x_{n}$ is not fixed we have $x_n\neq x_{n+1}=Tx_n$. Hence, $x_{n-1}$, $x_n$ and $x_{n+1}$ are pairwise distinct.
Let us set in~(\ref{e00}) $x=x_{n-1}$, $y=x_n$, $z=x_{n+1}$.
Then
$$
d(Tx_{n-1},Tx_n)+d(Tx_n,T_{x_{n+1}})+d(Tx_{n-1}, Tx_{n+1})
$$
$$
\leqslant
\alpha(d(x_{n-1},x_{n})+d(x_n,x_{n+1})+d(x_{n+1}, x_{n-1}))
$$
$$
+\lambda(d(x_{n-1},Tx_{n-1})+d(x_n,Tx_n)+d(x_{n+1}, Tx_{n+1}))
$$
and
$$
d(x_{n},x_{n+1})+d(x_{n+1},x_{n+2})+d(x_{n+2}, x_{n})
$$
$$
\leqslant
\alpha(d(x_{n-1},x_{n})+d(x_n,x_{n+1})+d(x_{n+1}, x_{n-1}))
$$
$$
\lambda(d(x_{n-1},x_{n})+d(x_n,x_{n+1})+d(x_{n+1}, x_{n+2})).
$$

Using the triangle inequality $d(x_{n+1}, x_{n-1})\leqslant d(x_{n+1}, x_{n})+d(x_{n},x_{n-1})$ we get

$$
d(x_{n},x_{n+1})+d(x_{n+1},x_{n+2})+d(x_{n+2}, x_{n})
$$
$$
\leqslant
\alpha(d(x_{n-1},x_{n})+d(x_n,x_{n+1})+ d(x_{n+1}, x_{n})+d(x_{n}, x_{n-1}))
$$
$$
\lambda(d(x_{n-1},x_{n})+d(x_n,x_{n+1})+d(x_{n+1}, x_{n+2})).
$$

Hence,
$$
(1-\lambda)d(x_{n+1},x_{n+2})\leqslant (2\alpha+\lambda)(d(x_{n-1},x_n)+d(x_n,x_{n+1}))-d(x_n,x_{n+1})-d(x_{n+2},x_n).
$$
Keeping in view of the triangle inequality $d(x_{n+1}, x_{n+2})\leqslant d(x_n, x_{n+1})+d(x_{n+2},x_n)$ we get
$$
(1-\lambda)d(x_{n+1},x_{n+2})
\leqslant (2\alpha+\lambda) (d(x_{n-1},x_n)+d(x_n,x_{n+1}))-d(x_{n+1},x_{n+2}).
$$
Further,
$$
(2-\lambda)d(x_{n+1},x_{n+2})\leqslant (2\alpha+\lambda) (d(x_{n-1},x_n)+d(x_n,x_{n+1})),
$$
$$
d(x_{n+1},x_{n+2})\leqslant \frac{2\alpha+\lambda}{2-\lambda} (d(x_{n-1}, x_{n})+d(x_{n}, x_{n+1}))
$$
and
$$
d(x_{n+1},x_{n+2})\leqslant \frac{2(2\alpha+\lambda)}{2-\lambda} \max\{d(x_{n-1}, x_{n}),d(x_{n}, x_{n+1})\}.
$$
Let $\gamma = \frac{2(2\alpha+\lambda)}{2-\lambda}$. Using the relation $(2\alpha+\frac{3 \lambda}{2}) \in [0, 1)$, we get $\gamma \in [0,1)$. Further,
\begin{equation}\label{e2}
d(x_{n+1},x_{n+2})\leqslant \gamma \max\{d(x_{n-1}, x_{n}),d(x_{n}, x_{n+1})\}.
\end{equation}

Set $a_n=d(x_{n-1},x_n)$, $n=1,2,\ldots,$ and let
$a=\max\{a_{1},a_{2}\}$.
Hence and by~(\ref{e2}) we obtain
$$
a_1\leqslant a, \, \,
a_2\leqslant a, \, \,
a_3\leqslant \gamma a, \, \,
a_4\leqslant \gamma a, \, \,
a_5\leqslant \gamma^2 a, \, \,
a_6\leqslant \gamma^2 a, \,\,
a_7\leqslant \gamma^3 a, \,
\ldots .
$$
Since $\gamma <1$, it is clear that the inequalities
$$
a_1\leqslant a, \, \,
a_2\leqslant a, \, \,
a_3\leqslant \gamma^{\frac{1}{2}} a, \, \,
a_4\leqslant \gamma a, \, \,
a_5\leqslant \gamma^{\frac{3}{2}} a, \, \,
a_6\leqslant \gamma^2 a, \,\,
a_7\leqslant \gamma^{\frac{5}{2}} a, \,
\ldots
$$
also hold. That is,
\begin{equation}\label{e9}
a_n\leqslant \gamma^{\frac{n}{2}-1}a
\end{equation}
for $n=3,4,\ldots$.

Let $p\in \mathbb N$, $p\geqslant 2$. By the triangle inequality, for $n\geqslant 3$ we have
$$
d(x_n,\,x_{n+p})\leqslant d(x_{n},\,x_{n+1})+d(x_{n+1},\,x_{n+2})+\ldots+d(x_{n+p-1},\,x_{n+p})
$$
$$
=a_{n+1}+a_{n+2}+\cdots+a_{n+p} \leqslant
a(\gamma^{\frac{n+1}{2}-1}+\gamma^{\frac{n+2}{2}-1}+\cdots
+\gamma^{\frac{n+p}{2}-1})
$$
$$
=a\gamma^{\frac{n+1}{2}-1}(1+\gamma^{\frac{1}{2}}+\cdots
+\gamma^{\frac{p-1}{2}})=a\gamma^{\frac{n-1}{2}}\frac{1-\sqrt{\gamma^p}}{1-\sqrt{\gamma}}.
$$
Since by the supposition $0\leqslant\gamma<1$, then $0\leqslant \sqrt{\gamma^p}<1$ and $d(x_n,\,x_{n+p})\leqslant a\gamma^{\frac{n-1}{2}}\frac{1}{1-\sqrt{\gamma}}$. Hence, $d(x_n,\,x_{n+p})\to 0$ as $n\to \infty$ for every $p>0$. Thus, $\{x_n\}$ is a Cauchy sequence. By the completeness of $(X,d)$, this sequence has a limit $x^*\in X$.

Recall that any three consecutive element of the sequence $(x_n)$ are pairwise distinct. If $x^*\neq x_k$ for all $k\in \{1,2,...\}$, then inequality~(\ref{e00}) holds for the pairwise distinct points  $x^*$, $x_{n-1}$ and $x_n$.
Suppose that there exists the smallest possible $k\in \{1,2,...\}$ such that $x^*=x_k$.  Let $m>k$ be such that $x^*=x_m$. Then the sequence $(x_n)$ is cyclic starting from $k$ and can not be a Cauchy sequence. Hence, the points $x^*$, $x_{n-1}$ and $x_n$ are pairwise distinct at least when $n-1>k$.

Let us prove that $Tx^*=x^*$. If there exists $k\in \{1,2,...\}$ such that $x_k=x^*$. Then suppose that $n-1>k$. By the triangle inequality and by inequality~(\ref{e00}) we have
$$
d(x^*,Tx^*)\leqslant d(x^*,x_{n})+d(x_{n},Tx^*)
=d(x^*,x_{n})+d(Tx_{n-1},Tx^*)
$$
$$
\leqslant d(x^*,x_{n})+d(Tx_{n-1},Tx^*)+d(Tx_{n-1},Tx_{n})+d(Tx_{n},Tx^*)
$$
$$
\leqslant d(x^*,x_{n})+ \alpha (d(x_{n-1},x^*)
+d(x_{n-1}, x_{n})+d(x_n,x^*))
$$
$$
+\lambda(d(x_{n-1},Tx_{n-1})
+d(x_{n},Tx_{n})+d(x^*,Tx^*)).
$$
Hence,
$$
(1-\lambda)d(x^*,Tx^*)\leqslant (1+\alpha) d(x^*,x_{n})+(\alpha+\lambda)d(x_{n-1},x_{n}) + \lambda d(x_{n},x_{n+1})+ \alpha d(x_{n-1},x^*)
$$
and

\begin{align}\label{e50}
    &d(x^*,Tx^*) \leqslant \\
    &\frac{1}{1-\lambda} \bigl( (1+\alpha) d(x^*,x_{n}) + (\alpha+\lambda)d(x_{n-1},x_{n}) + \lambda d(x_{n},x_{n+1})+ \alpha d(x_{n-1},x^*) \bigr).\nonumber
\end{align}
Since all the distances in the right side tend to zero as $n\to \infty$, we obtain $d(x^*,Tx^*)=0$.

Suppose that there exists at least three pairwise distinct fixed points $x$, $y$ and $z$.  Then $Tx=x$, $Ty=y$ and $Tz=z$, which contradicts to~(\ref{e00}).
\end{proof}

\begin{rem}
Suppose that under the supposition of the theorem the mapping $T$ has a fixed point $x^*$ which is a limit of some iteration sequence $x_0, x_1=Tx_0, x_2=Tx_1,\ldots$ such that $x_n\neq x^*$ for all $n=1,2,\ldots$. Then $x^*$ is a unique fixed point.
Indeed, suppose that $T$ has another fixed point $x^{**}\neq x^*$.
It is clear that $x_n\neq x^{**}$ for all $n=1,2,\ldots$. Hence, we have that the points $x^*$, $x^{**}$ and $x_n$ are pairwise distinct for all $n=1,2,\ldots$. Consider inequality~(\ref{e00}) for the points $x^*$, $x^{**}$ and $x_n$:
$$
d(Tx^*,Tx^{**})+d(Tx^*,Tx_{n})+d(Tx^{**},Tx_{n})
$$
$$
\leqslant \alpha(d(x^{*}, x^{**})+d(x^{*},x_n)+d(x^{**},x_n))+\lambda(d(x^*,Tx^{*})+d(x_{n},Tx_{n})+d(x^{**},Tx^{**}))
$$
Further,
$$
d(x^*,x^{**})+d(x^*,x_{n+1})+d(x^{**},x_{n+1})
$$
$$
\leqslant
\alpha(d(x^*,x^{**})+d(x^*,x_{n})+d(x^{**},x_n))+\lambda d(x_n,x_{n+1}).
$$
Taking into consideration that $d(x^*,x_{n})\to 0$, $d(x^*,x_{n+1})\to 0$, $d(x^{**},x_{n+1})\to d(x^{**},x^*)$ and $d(x_n,x_{n+1})\to 0$ and letting $n\to \infty$, we obtain the inequality
$
d(x^*,x^{**})\leqslant \alpha d(x^*,x^{**})
$
or $1\leqslant \alpha$, which leads to contradiction, since by Definition~\ref{d2} the inequality $\alpha \leqslant \frac{1}{2}$ holds.
\end{rem}

\begin{cor}
Setting $\lambda =0$ in~(\ref{e00}) implies that Theorem~\ref{t1} entails Theorem 2.4 from~\cite{P23} for mappings contracting perimeters of triangles with the coefficient $\alpha \in [0, \frac{1}{2})$.

Similarly, setting $\alpha =0$ in~(\ref{e00}) we get that Theorem~\ref{t1} implies exactly Theorem 3.2 from~\cite{PB23} for generalized Kannan type mappings.
\end{cor}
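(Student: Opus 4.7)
The plan is to verify that the corollary follows by a direct substitution in inequality~(\ref{e00}) together with a check that the coefficient constraint of Theorem~\ref{t1} specializes correctly in each case. Since both parts of the corollary are of the same nature, I would handle them in parallel.

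For the first assertion, I would substitute $\lambda=0$ in~(\ref{e00}). The right-hand side collapses to $\alpha(d(x,y)+d(y,z)+d(z,x))$, so~(\ref{e00}) becomes exactly the contraction of perimeters condition~(\ref{mcpt}) of Definition~\ref{d0}. Simultaneously, the admissibility constraint $2\alpha+\frac{3\lambda}{2}<1$ reduces to $2\alpha<1$, i.e., $\alpha\in[0,\tfrac12)$, which is the coefficient range stated in the conclusion. Invoking Theorem~\ref{t1} (using hypothesis (i) which is also the standing assumption precluding prime period $2$ in Theorem~2.4 of~\cite{P23}), we obtain the existence of a fixed point and the bound of at most two, recovering Theorem~2.4 of~\cite{P23}.

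For the second assertion, the substitution is $\alpha=0$ in~(\ref{e00}). The right-hand side becomes $\lambda(d(x,Tx)+d(y,Ty)+d(z,Tz))$, which is precisely inequality~(\ref{e1}) of Definition~\ref{d1}, i.e., $T$ is a generalized Kannan type mapping. The constraint $2\alpha+\frac{3\lambda}{2}<1$ degenerates to $\lambda\in[0,\tfrac23)$, matching the range in Definition~\ref{d1}. Applying Theorem~\ref{t1} under the no-period-$2$ assumption (i) yields exactly the conclusion of Theorem~3.2 of~\cite{PB23}.

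The only point requiring any attention, and therefore the main (modest) obstacle, is to confirm that condition (i) of Theorem~\ref{t1} is compatible with the hypotheses of the two target theorems; in both~\cite{P23} and~\cite{PB23} the absence of prime period $2$ is part of the standing assumptions, so no extra work is needed. Thus the corollary is obtained by two direct specializations of Theorem~\ref{t1}.
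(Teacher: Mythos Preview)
Your proposal is correct and matches the paper's treatment: the corollary is stated in the paper without proof, being an immediate consequence of two direct substitutions in~(\ref{e00}) together with the reduction of the constraint $2\alpha+\tfrac{3\lambda}{2}<1$, exactly as you outline. Your check that the coefficient ranges specialize to $\alpha\in[0,\tfrac12)$ and $\lambda\in[0,\tfrac23)$, and that condition~(i) is already assumed in the target theorems, is all that is needed.
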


It is well-known that \'CRR-type mappings are continuous at fixed points~\cite{Rh88}. The following proposition demonstrates that generalized \'CRR-type mappings also exhibit this desirable property.

\begin{prop}
Generalized \'CRR type mapping are continuous at fixed points.
\end{prop}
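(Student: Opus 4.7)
The plan is to show that $T$ is continuous at every fixed point $x^*$ by establishing a local Lipschitz-type bound of the form $d(x^*,Ty)\leqslant C\,d(x^*,y)$ for all $y\neq x^*$. If $x^*$ is an isolated point of $X$, any sequence converging to $x^*$ is eventually equal to $x^*$, so continuity is automatic; I may therefore assume throughout that $x^*$ is an accumulation point of $X$.

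The core calculation will use a single application of~(\ref{e00}) with an auxiliary point near $x^*$. Fix $y\neq x^*$ and pick any $z\in X\setminus\{x^*,y\}$, so that $x^*,y,z$ are pairwise distinct and~(\ref{e00}) applies. I would then exploit $Tx^*=x^*$ (so $d(x^*,Tx^*)=0$), the three triangle estimates $d(y,Ty)\leqslant d(y,x^*)+d(x^*,Ty)$, $d(z,Tz)\leqslant d(z,x^*)+d(x^*,Tz)$, and $d(y,z)\leqslant d(y,x^*)+d(x^*,z)$, and discard the non-negative term $d(Ty,Tz)$ on the left. A routine rearrangement then produces
\begin{equation*}
(1-\lambda)\bigl(d(x^*,Ty)+d(x^*,Tz)\bigr)\leqslant (2\alpha+\lambda)\bigl(d(x^*,y)+d(x^*,z)\bigr).
\end{equation*}
The hypothesis $2\alpha+\frac{3\lambda}{2}<1$ immediately forces $\lambda<\frac{2}{3}$, hence $1-\lambda>0$; dropping $d(x^*,Tz)\geqslant 0$ on the left then yields
\begin{equation*}
d(x^*,Ty)\leqslant \frac{2\alpha+\lambda}{1-\lambda}\bigl(d(x^*,y)+d(x^*,z)\bigr).
\end{equation*}

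To finish, since $x^*$ is an accumulation point and the only point (besides $x^*$) that $z$ must avoid is the fixed choice $y$, I will select a sequence $(z_k)$ in $X\setminus\{x^*,y\}$ with $d(x^*,z_k)\to 0$ and let $k\to\infty$ in the previous display. This gives $d(x^*,Ty)\leqslant \frac{2\alpha+\lambda}{1-\lambda}\,d(x^*,y)$ for every $y\neq x^*$, which delivers continuity of $T$ at $x^*$. I do not foresee any substantive obstacle; the one point worth flagging is the positivity of $1-\lambda$, which is guaranteed by the parameter constraint in Definition~\ref{d2}.
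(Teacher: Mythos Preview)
Your argument is correct. The core computation---applying~(\ref{e00}) to a triple containing the fixed point $x^*$, using $d(x^*,Tx^*)=0$, bounding $d(y,Ty)$ and $d(z,Tz)$ via the triangle inequality through $x^*$, and arriving at
\[
(1-\lambda)\bigl(d(x^*,Ty)+d(x^*,Tz)\bigr)\leqslant (2\alpha+\lambda)\bigl(d(x^*,y)+d(x^*,z)\bigr)
\]
---is exactly the calculation the paper carries out. The difference lies only in how the two auxiliary points are chosen. The paper takes both of them to be consecutive terms $x_n,x_{n+1}$ of an \emph{a priori} given sequence converging to $x^*$, and then must separately treat sequences with repeated elements or elements equal to $x^*$. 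You instead fix an arbitrary $y$ and let an independent auxiliary point $z$ tend to $x^*$, which yields a clean pointwise Lipschitz estimate $d(x^*,Ty)\leqslant \frac{2\alpha+\lambda}{1-\lambda}\,d(x^*,y)$ and makes the sequential case analysis unnecessary. Your packaging is therefore slightly more economical, though the underlying idea is the same; the paper's version has the minor advantage of not needing to split into the isolated/accumulation cases for $x^*$ (it works directly with the given convergent sequence), but this is outweighed by the subsequent case-handling it incurs.
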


\begin{proof}
Let $(X,d)$ be a metric space with $|X|\geqslant 3$, $T\colon X\to X$ be a \emph{generalized \'CRR type mapping} and $x^*$ be a fixed point of $T$. Let $(x_n)$ be a sequence such that $x_n\to x^*$, $x_{n}\neq x_{n+1}$ and $x_{n}\neq x^*$ for all $n$. Let us show that $Tx_n\to Tx^*$.
By~(\ref{e00}) we have
  \begin{multline*}
   d(Tx^*,Tx_n)+d(Tx_n,Tx_{n+1})+d(Tx_{n+1},Tx^*) \\ \leqslant \alpha(d(x^*,x_n)+d(x_n,x_{n+1})+d(x_{n+1},x^*))+\lambda (d(x^*,Tx^*)+d(x_n,Tx_n)+d(x_{n+1},Tx_{n+1})).
  \end{multline*}
Using  $d(x_n,x_{n+1})\leq d(x_n, x^*)+ d(x^*, x_{n+1})$, we get
 \begin{multline*}
   d(Tx^*,Tx_n)+d(Tx_n,Tx_{n+1})+d(Tx_{n+1},Tx^*) \\ \leqslant 2\alpha(d(x^*,x_n)+d(x_{n+1},x^*))+\lambda (d(x^*,Tx^*)+d(x_n,Tx_n)+d(x_{n+1},Tx_{n+1})).
\end{multline*}

Hence,
  \begin{multline*}
   d(Tx^*,Tx_n)+d(Tx_{n+1},Tx^*)\\ \leqslant 2\alpha(d(x^*,x_n)+d(x_{n+1},x^*))+ \lambda (d(x_n,Tx_n)+d(x_{n+1},Tx_{n+1})).
  \end{multline*}
By the triangle inequality we have
  \begin{multline*}
   d(Tx^*,Tx_n)+d(Tx_{n+1},Tx^*) \\ \leqslant
   2\alpha(d(x^*,x_n)+d(x_{n+1},x^*))+\lambda (d(x_n,x^*)+d(x^*,Tx_n)+d(x_{n+1},x^*)+d(x^*,Tx_{n+1})).
  \end{multline*}
Further,
 $$
   d(Tx^*,Tx_n)+d(Tx_{n+1},Tx^*) \leqslant \frac{2\alpha+\lambda}{1-\lambda}(d(x_n,x^*)+d(x_{n+1},x^*)).
$$
Since $d(x_n,x^*)\to 0$ and $d(x_{n+1},x^*)\to 0$ we have $$d(Tx^*,Tx_n)+d(Tx_{n+1},Tx^*) \to 0$$ and, hence, $d(Tx^*,Tx_n) \to 0$.

Let now $(x_n)$ be a sequence such that $x_n\to x^*$, and
$x_{n}\neq x^*$ for all $n$, but $x_{n} = x_{n+1}$ is possible. Let $(x_{n_k})$ be a subsequence of $(x_n)$ obtained by deleting corresponding repeating elements of $(x_n)$, i.e.,  such that $x_{n_k}\neq x_{n_{k+1}}$ for all $k$. It is clear that $x_{n_k}\to x^*$. As was just proved $Tx_{n_k}\to Tx^*=x^*$. The difference between $Tx_{n_k}$ and $Tx_{n}$ is that $Tx_{n}$ can be obtained from $Tx_{n_k}$ by inserting corresponding repeating consecutive elements. Hence, it is easy to see that $Tx_{n}\to Tx^*$.

Let $(x_n)$ be a sequence such that $x_n=x^*$ for all $n>N$, where $N$ is some natural number. Then, clearly, $Tx_{n}\to Tx^*$.
Let $(x_n)$ now be an arbitrary sequence such that $x_n\to x^*$ but not like in the previous case. Consider a subsequence  $(x_{n_k})$ obtained from $(x_n)$ by deleting elements $x^*$ (if they exist). Clearly, $x_{n_k}\to x^*$. It was just shown that such $Tx_{n_k}\to Tx^*$. Again, we see that $Tx_n$ can be obtained from $Tx_{n_k}$ by inserting in some places elements $Tx^*=x^*$. Again, it is easy to see that $Tx_{n}\to Tx^*$.
\end{proof}

\section{Asymptotic regularity}\label{sec4}

The concept of asymptotic regularity enables an extension of the parameters within the class of mappings for which the fixed-point theorems hold.

\begin{defn}
Let $(X, d)$ be a metric space. A mapping $T\colon X \to X$ satisfying the condition
\begin{equation}\label{ar}
\lim_{n \to \infty} d(T^{n+1}x, T^nx) = 0
\end{equation}
for all $x \in X$ is called asymptotically regular \cite{BP66}.
\end{defn}

\begin{rem}\label{r32}
Let $(X, d)$ be a metric space $T\colon X \to X$ be a self-mapping and let $x_0\in X$,  $T(x_0) = x_1$, $T(x_1) = x_2$, and so on.
If $T$ is asymptotically regular and the sequence $(x_n)$ does not possess a fixed point of $T$, then all the points $x_i$, $i\geqslant 0$, are pairwise distinct. Indeed, otherwise the sequence $(x_n)$ is cyclic starting from some number and condition~(\ref{ar}) does not hold.
\end{rem}

\begin{thm} \label{gkt1m}
Let $(X, d)$ be a complete metric space with $|X| \geqslant 3$ and let the mapping $T\colon X \to X$ be asymptotically regular generalized \'CRR type mapping with the coefficients $\alpha\in [0,\frac{1}{2})$ and $\lambda \in [0,1)$. Then $T$ has a fixed point. The number of fixed points is at most two.
\end{thm}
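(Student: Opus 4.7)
My plan is to retain the overall architecture of the proof of Theorem~\ref{t1} --- build the Picard orbit $x_n = T^n x_0$, show it is Cauchy, extract a limit $x^*$ by completeness, verify $Tx^* = x^*$, and rule out a third fixed point --- but the step that establishes the Cauchy property must be completely re-engineered, since the relaxed parameter range $\alpha \in [0, \frac{1}{2})$, $\lambda \in [0,1)$ no longer supports the contraction ratio $\gamma = \frac{2(2\alpha + \lambda)}{2-\lambda}$ used earlier. The tool that replaces the geometric decay of consecutive distances is asymptotic regularity coupled with Remark~\ref{r32}: assuming no iterate is a fixed point, the iterates $\{x_n\}$ are pairwise distinct and $a_n := d(x_n, x_{n+1}) \to 0$.

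For the Cauchy step I would apply (\ref{e00}) to the triple $x_{n-1}, x_n, x_{n+p-1}$, which is pairwise distinct for $n \geqslant 1$ and $p \geqslant 2$. Dropping two non-negative terms from the left-hand side and bounding $d(x_{n-1}, x_{n+p-1}) \leqslant a_{n-1} + d(x_n, x_{n+p-1})$ by the triangle inequality gives the recursion
\[
d(x_n, x_{n+p}) \leqslant 2\alpha\, d(x_n, x_{n+p-1}) + 2\alpha\, a_{n-1} + \lambda(a_{n-1} + a_n + a_{n+p-1}).
\]
Setting $M_n := \sup_{k \geqslant n-1} a_k$, the additive term is at most $(2\alpha + 3\lambda) M_n$ uniformly in $p$, and the multiplicative factor $2\alpha$ is strictly less than $1$ by hypothesis. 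Iterating the recursion in $p$ from the base case $d(x_n, x_{n+1}) = a_n \leqslant M_n$ yields a bound $d(x_n, x_{n+p}) \leqslant \frac{(1+3\lambda) M_n}{1 - 2\alpha}$ that is uniform in $p$; since $M_n \to 0$ by asymptotic regularity, the sequence $\{x_n\}$ is Cauchy and converges to some $x^* \in X$.

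To identify $x^*$ as a fixed point I would mimic the corresponding estimate from Theorem~\ref{t1}. Since all $x_n$ are distinct, at most one of them can equal $x^*$, so for all sufficiently large $n$ the points $x^*, x_{n-1}, x_n$ are pairwise distinct; applying (\ref{e00}) to this triple together with the triangle inequality $d(x^*, Tx^*) \leqslant d(x^*, x_n) + d(Tx_{n-1}, Tx^*)$, and then dividing by $1 - \lambda > 0$, isolates $d(x^*, Tx^*)$ and forces it to $0$ as $n \to \infty$ because every remaining term on the right-hand side tends to zero. Finally, if $T$ had three pairwise distinct fixed points $x, y, z$, substituting them into (\ref{e00}) would collapse the inequality to $(1 - \alpha)(d(x,y) + d(y,z) + d(x,z)) \leqslant 0$, contradicting $\alpha < \frac{1}{2}$; hence there are at most two fixed points.

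The main obstacle is the Cauchy step above: in Theorem~\ref{t1} the condition $2\alpha + \frac{3\lambda}{2} < 1$ directly produced a geometric decay of consecutive distances, but here neither $\gamma$ nor any analogous decay rate is available from the inequality alone, so the decay of the $a_n$ must instead be imported through asymptotic regularity and fed into a scalar recursion whose contractive factor $2\alpha$ comes from just one use of the triangle inequality --- which is precisely why the threshold $\alpha < \frac{1}{2}$ is the natural limit of this method.
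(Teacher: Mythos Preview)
Your proposal is correct. The overall architecture matches the paper's proof, and the fixed-point identification as well as the at-most-two argument are handled the same way. The Cauchy step, however, is organized differently.

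The paper applies~(\ref{e00}) to the triple $(x_n, x_{n+p-1}, x_{n+p})$: it writes $d(x_n,x_{n+p}) \leqslant d(x_n,x_{n+1}) + d(x_{n+1},x_{n+p+1}) + d(x_{n+p+1},x_{n+p})$, adds the non-negative term $d(x_{n+1},x_{n+p})$ to complete the image perimeter, bounds that perimeter via~(\ref{e00}), and then uses $d(x_n,x_{n+p-1}) \leqslant d(x_n,x_{n+p}) + d(x_{n+p},x_{n+p-1})$ to route the unknown long side back through $d(x_n,x_{n+p})$, which is absorbed into the left-hand side with coefficient $2\alpha$. This gives in one stroke the closed-form bound
\[
d(x_n,x_{n+p}) \leqslant \tfrac{1+\lambda}{1-2\alpha}\,a_n + \tfrac{2\alpha+\lambda}{1-2\alpha}\,a_{n+p-1} + \tfrac{\lambda}{1-2\alpha}\,a_{n+p},
\]
each term on the right tending to zero by asymptotic regularity. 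Your version instead applies~(\ref{e00}) to $(x_{n-1}, x_n, x_{n+p-1})$, obtains the recursion $d(x_n,x_{n+p}) \leqslant 2\alpha\, d(x_n,x_{n+p-1}) + (2\alpha+3\lambda)M_n$, and iterates in $p$ against the tail supremum $M_n$. Both routes hinge on exactly the same threshold $2\alpha < 1$; the paper's is a one-shot estimate with explicit constants, while yours is a standard contraction-recursion argument that is more elementary in flavor but requires the auxiliary quantity $M_n = \sup_{k \geqslant n-1} a_k$ and an induction in $p$.
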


\begin{proof}
Let $x_0 \in X$, $T(x_0) = x_1$, $T(x_1) = x_2$, and so on. Suppose that $(x_n)$ does not possess a fixed point of $T$. Let us prove that $(x_n)$ is a Cauchy sequence. It is sufficient to show that $d(x_n,x_{n+p})\to 0$ as $n\to \infty$ for all $p>0$.
If $p=1$, then this follows from the definition of asymptotic regularity. Let $p\geqslant 2$. By Remark~\ref{r32} the points $x_n$, $x_{n+p-1}$, $x_{n+p}$ are pairwise distinct. Using repeated triangle inequality, inequality~(\ref{e00}) and asymptotic regularity, we get
\begin{align*}
& {} d(x_n,x_{n+p})\leqslant d(x_n,x_{n+1})+d(x_{n+1},x_{n+p+1})+d(x_{n+p+1},x_{n+p})\\\leqslant & {}
d(x_n,x_{n+1})+d(x_{n+1},x_{n+p+1})+d(x_{n+p+1},x_{n+p})+d(x_{n+1},x_{n+p})\\\leqslant & {}
d(x_n,x_{n+1})+\alpha(d(x_{n},x_{n+p})+d(x_{n+p},x_{n+p-1})+d(x_{n},x_{n+p-1}))\\ & {}
+\lambda(d(x_{n},Tx_{n})+d(x_{n+p},Tx_{n+p})+d(x_{n+p-1},Tx_{n+p-1}))\\ \leqslant & {}
d(x_n,x_{n+1})+\alpha(d(x_{n},x_{n+p})+d(x_{n+p},x_{n+p-1})+d(x_{n},x_{n+p})+d(x_{n+p},x_{n+p-1}))\\ & {}
+\lambda(d(x_{n},Tx_{n})+d(x_{n+p},Tx_{n+p})+d(x_{n+p-1},Tx_{n+p-1})).\\
\end{align*}
Hence
$$d(x_n,x_{n+p})\leq \frac{1+\lambda}{1-2\alpha} d(x_n,x_{n+1})+ \frac{2\alpha+\lambda}{1-2\alpha} d(x_{n+p},x_{n+p-1})$$
$$+ \frac{\lambda}{1-2\alpha} d(x_{n+p},x_{n+p+1}) \to 0
$$
as $n\to \infty$. Thus, $(x_{n})$ is a Cauchy sequence. Since the points $x^*$, $x_n$, $x_{n+1}$ are pairwise distinct for all $n\geqslant 0$, the rest of the proof follows from the proof given in Theorem~\ref{t1}, see~(\ref{e50}).
\end{proof}

Below we show that the assumption of continuity for the mappings $T$ allows us to obtain fixed point theorems for more general classes of mappings than generalized \'CRR type mappings even with the coefficient $\lambda\in [0,1)$.

We now provide a more general version of generalized \'CRR type mappings and introduce the following definitions. First, we define the class $\mathcal{F}$ of functions $F\colon \mathbb{R}^+ \times \mathbb{R}^+ \times \mathbb{R}^+ \rightarrow \mathbb{R}^+$ satisfying the following conditions:

\begin{enumerate}
    \item[(i)] $F(0, 0, 0) = 0$;
    \item[(ii)] $F$ is continuous at $(0, 0, 0)$.
\end{enumerate}

\begin{defn}\label{d3}
Let $(X,d)$ be a metric space with $|X|\geqslant 3$. We shall say that $T\colon X\to X$ is a \emph{generalized $F$-\'CRR type mapping} on $X$ if there exist $\alpha\in [0, \frac{1}{2})$ and $F \in \mathcal{F}$ such that the inequality
  \begin{equation}\label{e3}
\begin{aligned}
   &d(Tx,Ty) + d(Ty,Tz) + d(Tx,Tz) \\
   &\leqslant \alpha(d(x,y) + d(y,z) + d(z,x)) + F(d(x,Tx),d(y,Ty), d(z,Tz))
\end{aligned}
\end{equation}
  holds for all three pairwise distinct points $x,y,z \in X$.
\end{defn}

\begin{thm} \label{gkt2m}
Let $(X, d)$ be a complete metric space with $|X| \geqslant 3$ and let the mapping
$T\colon X \to X$ be a continuous, asymptotically regular generalized $F$-\'CRR type mapping. Then $T$ has a fixed point. The number of fixed points is at most two.
\end{thm}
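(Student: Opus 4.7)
The plan is to mirror the proof of Theorem~\ref{gkt1m}, replacing the scalar $\lambda$-term in (\ref{e00}) by the more flexible $F$-term from (\ref{e3}), and to exploit continuity of $T$ to identify the limit point as a fixed point directly, bypassing the technical estimate (\ref{e50}) needed in Theorems~\ref{t1} and~\ref{gkt1m}.

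I would begin with an arbitrary $x_0 \in X$ and iterate $x_n = Tx_{n-1}$. If some $x_n$ is already fixed, the proof ends. Otherwise, Remark~\ref{r32} together with asymptotic regularity forces all iterates to be pairwise distinct, so any three of them can legitimately be substituted into (\ref{e3}). The same repeated-triangle-inequality bookkeeping as in the proof of Theorem~\ref{gkt1m} then gives
\[
d(x_n, x_{n+p}) \leqslant d(x_n, x_{n+1}) + d(x_{n+1}, x_{n+p+1}) + d(x_{n+p+1}, x_{n+p}),
\]
and after padding the right-hand side by the nonnegative quantity $d(x_{n+1}, x_{n+p})$ one recognises the sum $d(x_{n+1}, x_{n+p+1}) + d(x_{n+p+1}, x_{n+p}) + d(x_{n+1}, x_{n+p})$ as the left-hand side of (\ref{e3}) applied to the pairwise distinct triple $x_n, x_{n+p}, x_{n+p-1}$. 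Applying (\ref{e3}) and collapsing $d(x_n, x_{n+p-1})$ via a further triangle inequality leads to
\[
(1 - 2\alpha)\,d(x_n, x_{n+p}) \leqslant d(x_n, x_{n+1}) + 2\alpha\, d(x_{n+p-1}, x_{n+p}) + F\bigl(d(x_n, x_{n+1}),\, d(x_{n+p}, x_{n+p+1}),\, d(x_{n+p-1}, x_{n+p})\bigr).
\]

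Since $\alpha < 1/2$, the left coefficient is positive. Asymptotic regularity makes $d(x_k, x_{k+1}) \to 0$, so each of the three arguments of $F$ becomes uniformly small in $p$ as $n \to \infty$; continuity of $F$ at $(0,0,0)$ together with $F(0,0,0) = 0$ then forces the $F$-term to vanish in this limit, and the remaining two terms on the right vanish for the same reason. Hence $d(x_n, x_{n+p}) \to 0$ uniformly in $p$, so $(x_n)$ is Cauchy and converges to some $x^* \in X$ by completeness. Continuity of $T$ immediately gives $x^* = \lim x_{n+1} = \lim Tx_n = Tx^*$, which is precisely where the newly imposed continuity hypothesis earns its keep.

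For the cardinality bound, I would argue by contradiction: if three pairwise distinct fixed points $x, y, z$ existed, substituting into (\ref{e3}) with $d(x, Tx) = d(y, Ty) = d(z, Tz) = 0$ and $F(0,0,0) = 0$ would yield $d(x,y) + d(y,z) + d(x,z) \leqslant \alpha(d(x,y) + d(y,z) + d(x,z))$ with $\alpha < 1/2 < 1$, forcing the sum of distances to vanish and contradicting pairwise distinctness. The main technical obstacle is the Cauchy step: unlike the scalar $\lambda$ case in Theorem~\ref{gkt1m}, one cannot simply isolate a coefficient $(1 - \lambda)$, so one must appeal to the qualitative properties of $F$ and verify that all three of its arguments go to zero \emph{uniformly in $p$}, which is exactly what asymptotic regularity together with $F \in \mathcal{F}$ supplies.
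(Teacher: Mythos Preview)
Your proposal is correct and follows essentially the same route as the paper's proof: the same triangle-inequality padding, the same application of~(\ref{e3}) to the triple $x_n, x_{n+p}, x_{n+p-1}$, the same $(1-2\alpha)$ isolation, the same appeal to continuity of $T$ to identify the limit as a fixed point, and the same three-fixed-points contradiction. Your explicit remark that the arguments of $F$ tend to zero \emph{uniformly in $p$} (since asymptotic regularity gives $d(x_k,x_{k+1})<\varepsilon$ for all $k\geqslant N$, hence for $k=n,\,n+p-1,\,n+p$ simultaneously) is a welcome clarification that the paper leaves implicit.
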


\begin{proof}
Let $x_0 \in X$, $T(x_0) = x_1$, $T(x_1) = x_2$, and so on. Suppose that $(x_n)$ does not possess a fixed point of $T$. Let us prove that $(x_n)$ is a Cauchy sequence. It is sufficient to show that $d(x_n,x_{n+p})\to 0$ as $n\to \infty$ for all $p>0$.
If $p=1$, then this follows from the definition of asymptotic regularity. Let $p\geqslant 2$. By Remark~\ref{r32} the points $x_n$, $x_{n+p-1}$, $x_{n+p}$ are pairwise distinct. Using repeated triangle inequality, inequality~(\ref{e3}) and asymptotic regularity, we get

\begin{align*}
& {} d(x_n,x_{n+p})\leqslant d(x_n,x_{n+1})+d(x_{n+1},x_{n+p+1})+d(x_{n+p+1},x_{n+p})\\\leqslant & {}
d(x_n,x_{n+1})+d(x_{n+1},x_{n+p+1})+d(x_{n+p+1},x_{n+p})+d(x_{n+1},x_{n+p})\\\leqslant & {}
d(x_n,x_{n+1})+\alpha(d(x_{n},x_{n+p})+d(x_{n+p},x_{n+p-1})+d(x_{n},x_{n+p-1}))\\ & {}
+F(d(x_{n},Tx_{n}),d(x_{n+p},Tx_{n+p}),d(x_{n+p-1},Tx_{n+p-1}))\\ \leqslant & {}
d(x_n,x_{n+1})+\alpha(d(x_{n},x_{n+p})+d(x_{n+p},x_{n+p-1})+d(x_{n},x_{n+p})+d(x_{n+p},x_{n+p-1}))\\ & {}
+F(d(x_{n},x_{n+1}),d(x_{n+p},x_{n+p+1}),d(x_{n+p-1},x_{n+p})).
\end{align*}
Hence
\begin{equation*}
\begin{aligned}
    &d(x_n,x_{n+p}) \\
    &\leq \frac{1}{1-2\alpha} \left[d(x_n,x_{n+1}) + 2\alpha d(x_{n+p},x_{n+p-1}) \right. \\
    &\quad \left. + F(d(x_{n},x_{n+1}),d(x_{n+p},x_{n+p+1}),d(x_{n+p-1},x_{n+p}))\right] \to 0
\end{aligned}
\end{equation*}
as $n\to \infty$. Thus, $(x_{n})$ is a Cauchy sequence.  By the completeness of $(X, d)$, this sequence has a limit $x^* \in X$. Now
$$
d(Tx^*,x^*)\leqslant d(Tx^*,x_n)+d(x_n,x^*)= d(Tx^*,Tx_{n-1})+d(x_n,x^*)
$$
Since $T$ is continuous letting $n\to \infty$, we obtain $Tx^*=x^*$.
The rest of the proof follows from reasoning similar to the last paragraph of the proof of Theorem~\ref{t1}.
\end{proof}

Let $\mathcal{B}$ represent the class of functions $\beta: [0,\infty) \to [0,\infty)$ that meet the following condition: $\limsup\limits_{t\rightarrow 0}\beta(t) <\infty$.

\begin{defn}\label{d4}
Let $(X,d)$ be a metric space with $|X|\geqslant 3$. We shall say that $T\colon X\to X$ is a \emph{generalized $\mathcal{B}$-\'CRR type mapping} on $X$ if there exist $\alpha \in [0, \frac{1}{2})$ and $\beta_1, \beta_2, \beta_3 \in \mathcal{B}$ such that the inequality
\begin{equation}\label{e4}
   \begin{aligned}
      &d(Tx,Ty) + d(Ty,Tz) + d(Tx,Tz) \leq \alpha(d(x,y) + d(y,z) + d(x,z))\\
      &+ \beta_1(d(x,Tx))d(x,Tx) + \beta_2(d(y,Ty))d(y,Ty) + \beta_3(d(z,Tz))d(z,Tz)
   \end{aligned}
\end{equation}
holds for all three pairwise distinct points $x,y,z \in X$.
\end{defn}

\begin{cor}
Let $(X, d)$ be a complete metric space with $|X| \geqslant 3$ and let the mapping
$T\colon X \to X$ be a continuous, asymptotically regular generalized $\mathcal{B}$-\'CRR type mapping. Then $T$ has a fixed point. Moreover, the number of fixed points is at most two.
\end{cor}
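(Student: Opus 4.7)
The plan is to reduce the corollary to Theorem~\ref{gkt2m} by exhibiting an appropriate function $F \in \mathcal{F}$ so that every generalized $\mathcal{B}$-\'CRR type mapping becomes a generalized $F$-\'CRR type mapping with the same coefficient $\alpha$. The natural candidate, dictated by the right-hand side of~(\ref{e4}), is
\[
F(u,v,w) := \beta_1(u)\,u + \beta_2(v)\,v + \beta_3(w)\,w, \qquad (u,v,w)\in \mathbb{R}^+\times \mathbb{R}^+\times \mathbb{R}^+.
\]
With this choice, inequality~(\ref{e4}) rewrites verbatim as inequality~(\ref{e3}), so it only remains to check that $F$ belongs to the class $\mathcal{F}$.

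First I would verify condition (i): since each term $\beta_i(0)\cdot 0 = 0$, we immediately get $F(0,0,0)=0$. Second, I would verify condition (ii), which is the only nontrivial point. Using the defining property of $\mathcal{B}$, namely $\limsup_{t\to 0}\beta_i(t)<\infty$, there exist constants $M_i>0$ and $\delta_i>0$ such that $\beta_i(t)\leqslant M_i$ whenever $0\leqslant t<\delta_i$. Hence for $(u,v,w)$ in a neighbourhood of $(0,0,0)$ we have
\[
0\leqslant F(u,v,w) \leqslant M_1 u + M_2 v + M_3 w,
\]
and the right-hand side tends to $0$ as $(u,v,w)\to (0,0,0)$. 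Thus $F$ is continuous at the origin, and $F\in \mathcal{F}$.

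Having established that $T$ is a continuous, asymptotically regular generalized $F$-\'CRR type mapping on the complete metric space $(X,d)$ with $|X|\geqslant 3$, I would invoke Theorem~\ref{gkt2m} directly to conclude that $T$ admits a fixed point and that the total number of fixed points does not exceed two.

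The argument is essentially a one-line reduction, so there is no substantial obstacle; the only place that requires a small computation is confirming continuity of $F$ at the origin, which is why the hypothesis $\limsup_{t\to 0}\beta_i(t)<\infty$ on the functions in $\mathcal{B}$ is precisely tailored to make this step work.
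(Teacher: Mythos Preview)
Your proof is correct and follows exactly the same route as the paper: define $F(u,v,w)=\beta_1(u)u+\beta_2(v)v+\beta_3(w)w$, check that $F\in\mathcal{F}$ via the $\limsup$ hypothesis, and invoke Theorem~\ref{gkt2m}. Your verification of continuity at the origin is in fact more carefully written than the paper's terse version.
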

\begin{proof}
Set $F(x,y,z)=\beta_1(x)x+\beta_2(x)x+\beta_3(x)$. Then  $F(0,0,0)=0$ and $\lim\limits_{x, y, z \to 0} F(x,y,z)=0$ since  $\limsup\limits_{t\rightarrow 0} \beta_i(t) <\infty$ for $i=1,2,3$. Hence, this assertion follows from Theorem~\ref{gkt2m}.
\end{proof}

By setting $\beta_1(t)=\beta_2(t)=\beta_3(t)= \lambda \geqslant 0$ in (\ref{e4}), we get a generalized \'CRR type mapping with the coefficients  $\alpha \in [0, \frac{1}{2})$ and $\lambda \in [0,\infty)$. Hence, we immediately obtain the following.

\begin{cor}\label{cgkt2m}
Let $(X, d)$ be a complete metric space with $|X| \geqslant 3$ and let the mapping
$T\colon X \to X$ be a continuous asymptotically regular generalized \'CRR type mapping with the coefficients $\alpha \in [0, \frac{1}{2})$ and $\lambda \in [0,\infty)$. Then $T$ has a fixed point. The number of fixed points is at most two.
\end{cor}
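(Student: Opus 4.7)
The plan is to reduce Corollary~\ref{cgkt2m} to the immediately preceding corollary on generalized $\mathcal{B}$-\'CRR type mappings by a one-line substitution, exactly as suggested by the paragraph preceding the statement. Concretely, I would choose the constant functions $\beta_1(t) = \beta_2(t) = \beta_3(t) \equiv \lambda$ for all $t \in [0,\infty)$.

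First I would verify that each such constant function lies in the class $\mathcal{B}$. Since $\limsup_{t \to 0} \lambda = \lambda < \infty$, the membership is trivial, and this is the only place where the assumption $\lambda \in [0,\infty)$ is used.

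Next I would observe that under this choice, the defining inequality~(\ref{e4}) of a generalized $\mathcal{B}$-\'CRR type mapping becomes
\begin{equation*}
\begin{aligned}
 &d(Tx,Ty) + d(Ty,Tz) + d(Tx,Tz) \\
 &\leqslant \alpha(d(x,y) + d(y,z) + d(x,z)) + \lambda\bigl(d(x,Tx) + d(y,Ty) + d(z,Tz)\bigr),
\end{aligned}
\end{equation*}
which is precisely inequality~(\ref{e00}) defining a generalized \'CRR type mapping with coefficient $\lambda$. Hence the mapping $T$ in the hypothesis, being a continuous asymptotically regular generalized \'CRR type mapping with $\alpha \in [0,\tfrac{1}{2})$ and $\lambda \in [0,\infty)$, automatically qualifies as a continuous asymptotically regular generalized $\mathcal{B}$-\'CRR type mapping with the same $\alpha$ and with $\beta_i\equiv\lambda\in\mathcal{B}$.

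Applying the previous corollary then yields both the existence of a fixed point and the bound of at most two fixed points, completing the proof. There is no genuine obstacle here: the substantive analytical work — setting up the Cauchy sequence via asymptotic regularity and using continuity to pass to the limit — was already carried out in Theorem~\ref{gkt2m} and inherited by its $\mathcal{B}$-class specialization. The present corollary is simply the instance where the three auxiliary functions are chosen to be the constant $\lambda$, and the main point worth highlighting is that taking $\beta_i$ constant lifts the restriction $2\alpha+\tfrac{3\lambda}{2}<1$ appearing in Theorem~\ref{t1} all the way to $\lambda\in[0,\infty)$, at the price of assuming continuity and asymptotic regularity.
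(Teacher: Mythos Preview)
Your proposal is correct and follows exactly the paper's approach: the paper derives Corollary~\ref{cgkt2m} by setting $\beta_1(t)=\beta_2(t)=\beta_3(t)\equiv\lambda$ in the preceding $\mathcal{B}$-\'CRR corollary, precisely as you do. Your write-up is simply a more explicit version of the one-line reduction the paper indicates.
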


\begin{rem}
 It is noteworthy to mention that the continuity requirement for the mapping $T$ in Corollary~\ref{cgkt2m} can be further relaxed. Corollary~\ref{cgkt2m} remains applicable when the continuity assumption on the mapping $T$ is replaced with various weaker continuity notions, such as orbital continuity, $x_0$-orbitally continuity, almost orbitally continuity, weakly orbitally continuity, $T$-orbitally lower semi-continuity, or $k$-continuity. Interested readers can find more information on these weaker continuity notions in~\cite{B23}.
\end{rem}

Let $\mathcal{U}$ denote the class of functions $\varphi \colon [0, \infty) \to [0, \infty)$ satisfying:
\begin{enumerate}
  \item[(a)] $\varphi(t) < t$ for all $t > 0$,
  \item[(b)] $\varphi$ is upper semi-continuous.
\end{enumerate}

\begin{defn}\label{d5}
Let $(X,d)$ be a metric space with $|X|\geqslant 3$. We shall say that $T\colon X\to X$ is a \emph{generalized $(\varphi-F)$-\'CRR type mapping} on $X$ if there exist $\varphi \in \mathcal{U}$ and $F \in \mathcal{F}$ such that the inequality
  \begin{equation}\label{e5}
\begin{aligned}
   &d(Tx,Ty) + d(Ty,Tz) + d(Tx,Tz) \\
   &\leqslant \varphi\max\{d(x,y),d(y,z),d(z,x)\} + F(d(x,Tx),d(y,Ty), d(z,Tz))
\end{aligned}
\end{equation}
  holds for all three pairwise distinct points $x,y,z \in X$.
\end{defn}

\begin{thm} \label{gkt6m}
Let $(X, d)$ be a complete metric space with $|X| \geqslant 3$ and let the mapping
$T\colon X \to X$ be a continuous, asymptotically regular generalized $(\varphi-F)$-\'CRR type mapping. Then $T$ has a fixed point. The number of fixed points is at most two.
\end{thm}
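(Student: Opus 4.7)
The plan is to follow the template of the proofs of Theorems~\ref{t1}, \ref{gkt1m} and~\ref{gkt2m}, replacing the linear Cauchy argument with the subsequence technique standard for $\varphi$-type contractions where $\varphi$ is only required to be upper semi-continuous with $\varphi(t)<t$ for $t>0$. Fix $x_0\in X$ and set $x_{n+1}=Tx_n$. If some iterate is already a fixed point, we are done; otherwise Remark~\ref{r32} combined with asymptotic regularity of $T$ guarantees that the iterates $(x_n)$ are pairwise distinct and that $d(x_n,x_{n+1})\to 0$.

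The central step is to show that $(x_n)$ is Cauchy, and I would argue by contradiction. Assuming it is not, there exist $\varepsilon>0$ and indices $m_k<n_k$ chosen minimally with $d(x_{m_k},x_{n_k})\geqslant \varepsilon$ and $d(x_{m_k},x_{n_k-1})<\varepsilon$. A routine use of the triangle inequality together with $d(x_n,x_{n+1})\to 0$ yields the standard four limit relations
\begin{equation*}
d(x_{m_k},x_{n_k})\to \varepsilon,\quad d(x_{m_k},x_{n_k-1})\to \varepsilon,\quad d(x_{m_k+1},x_{n_k})\to \varepsilon,\quad d(x_{m_k+1},x_{n_k+1})\to \varepsilon.
\end{equation*}
For $k$ large we must have $n_k>m_k+1$ (otherwise $d(x_{m_k},x_{n_k})=d(x_{m_k},x_{m_k+1})<\varepsilon$), so the points $x_{m_k},x_{n_k-1},x_{n_k}$ are pairwise distinct and~(\ref{e5}) applies, giving
\begin{equation*}
\begin{aligned}
&d(x_{m_k+1},x_{n_k})+d(x_{n_k},x_{n_k+1})+d(x_{m_k+1},x_{n_k+1})\\
&\leqslant \varphi\bigl(\max\{d(x_{m_k},x_{n_k-1}),d(x_{n_k-1},x_{n_k}),d(x_{m_k},x_{n_k})\}\bigr)\\
&\quad + F\bigl(d(x_{m_k},x_{m_k+1}),d(x_{n_k-1},x_{n_k}),d(x_{n_k},x_{n_k+1})\bigr).
\end{aligned}
\end{equation*}
As $k\to\infty$, the left-hand side tends to $2\varepsilon$; the argument of $\varphi$ converges to $\varepsilon$, so upper semi-continuity gives $\limsup_k\varphi(\cdots)\leqslant \varphi(\varepsilon)$; and continuity of $F$ at $(0,0,0)$ together with $F(0,0,0)=0$ forces the $F$-term to $0$. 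The result is $2\varepsilon\leqslant \varphi(\varepsilon)<\varepsilon$, a contradiction. Hence $(x_n)$ is Cauchy.

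By completeness $x_n\to x^*\in X$, and continuity of $T$ yields $Tx^*=x^*$. To bound the number of fixed points by two, suppose $x,y,z$ were three pairwise distinct fixed points and set $M=\max\{d(x,y),d(y,z),d(z,x)\}>0$. Applying~(\ref{e5}) to $x,y,z$ (so that the $F$-term vanishes since $F(0,0,0)=0$) and noting that the left-hand side is bounded below by $M$ yields $M\leqslant \varphi(M)<M$, a contradiction. The main technical obstacle in this plan is the Cauchy step: since $\varphi$ is only upper semi-continuous and nonlinear, one cannot isolate $d(x_n,x_{n+p})$ by an algebraic rearrangement as in Theorem~\ref{gkt2m}, so the subsequence/limit-superior device above appears to be unavoidable.
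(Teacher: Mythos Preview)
Your proof is correct and follows essentially the same strategy as the paper: both argue the Cauchy property by contradiction via the standard subsequence technique, apply~(\ref{e5}) to the triple $x_{m_k},x_{n_k-1},x_{n_k}$, and let $k\to\infty$ using asymptotic regularity, upper semi-continuity of $\varphi$, and continuity of $F$ at the origin. The only cosmetic difference is that the paper first wraps the three-point inequality back to $d(x_{n_i},x_{n_i+p_i})$ through an extra triangle-inequality chain (as in Theorems~\ref{gkt1m} and~\ref{gkt2m}) to reach $\varepsilon\leqslant\varphi(\varepsilon)$, whereas you evaluate the left-hand side of~(\ref{e5}) directly via the four limit relations and obtain the slightly sharper $2\varepsilon\leqslant\varphi(\varepsilon)$; either inequality yields the same contradiction.
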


\begin{proof}
Let $x_0 \in X$, $T(x_0) = x_1$, $T(x_1) = x_2$, and so on. Suppose that $(x_n)$ does not possess a fixed point of $T$. Let us prove that $(x_n)$ is a Cauchy sequence. Suppose the converse.
Then there exists $\varepsilon >0$, so that for each $i=1,2,\ldots $ there are some integers $n_i=n_i(i),p_i=p_i(i)\in \mathbb{N}$ such that $n_i\geqslant i$ and
\begin{equation}\label{rr1}
 d(x_{n_i},x_{n_i+p_i})\geqslant \varepsilon.
\end{equation}
Let for each $i$ the integer $p_i$ be the smallest number (possibly $p_i=1$) such that~(\ref{rr1}) holds.
Thus, we may assume that
$$
d(x_{n_i},x_{n_i+p_i-1})<\varepsilon.
$$
Hence, for each $i\in \mathbb{N}$, we have
\begin{align*}
\varepsilon\leqslant d(x_{n_i}, x_{n_i+p_i})\leqslant
&~d(x_{n_i}, x_{n_i+p_i-1}) + d(x_{n_i+p_i-1}, x_{n_i+p_i})\\
< &~\varepsilon + d(x_{n_i+p_i-1}, x_{n_i+p_i}),
\end{align*}
and it follows from asymptotic regularity that $$\lim_{i\rightarrow\infty} d(x_{n_i},x_{n_i+p_i}) =\varepsilon.$$

Note that $p_i>1$ for all $i>N$, where $N$ is a sufficiently large integer. Indeed, otherwise~(\ref{rr1}) contradicts to asymptotic regularity.
Since by assumption $(x_n)$ does not possess a fixed point of $T$, by Remark~\ref{r32}, the points $x_{n_i}$, $x_{n_i+p_i-1}$, $x_{n_i+p_i}$ are pairwise distinct for $i>N$. Using repeated triangle inequality and inequality~(\ref{e5}), we get
\begin{align*}
& {} d(x_{n_{i}},x_{n_i+p_i})\leqslant d(x_{n_{i}},x_{n_i+1})+d(x_{n_i+1},x_{n_i+p_i+1})+d(x_{n_i+p_i+1},x_{n_i+p_i})\\\leqslant & {}
d(x_{n_{i}},x_{n_i+1})+d(x_{n_i+1},x_{n_i+p_i+1})+d(x_{n_i+p_i+1},x_{n_i+p_i})+d(x_{n_i+1},x_{n_i+p_i})\\\leqslant & {}
d(x_{n_{i}},x_{n_i+1})+\varphi \max\{d(x_{n_i},x_{n_i+p_i}), d(x_{n_i+p_i},x_{n_i+p_i-1}), d(x_{n_i},x_{n_i+p_i-1})\}\\ & {}
+F(d(x_{n_i},Tx_{n_i}),d(x_{n_i+p_i},Tx_{n_i+p_i}),d(x_{n_i+p_i-1},Tx_{n_i+p_i-1}))\\ \leqslant & {}
d(x_{n_{i}},x_{n_i+1})+\varphi \max\{d(x_{n_i},x_{n_i+p_i}), d(x_{n_i+p_i},x_{n_i+p_i-1}), d(x_{n_i},x_{n_i+p_i}) \\ & {}+d(x_{n_i+p_i},x_{n_i+p_i-1})\}
+F(d(x_{n_i},Tx_{n_i}),d(x_{n_i+p_i},Tx_{n_i+p_i}),d(x_{n_i+p_i-1},Tx_{n_i+p_i-1})).\\
\end{align*}

Letting $i\rightarrow\infty$, using asymptotic regularity and upper semi-continuity of $\varphi$, we obtain $$0<\varepsilon =\lim_{i\rightarrow\infty} d(x_{n_i},x_{n_i+p_i}) \leqslant
\limsup _{i\rightarrow\infty}\varphi(d(x_{n_i}, x_{n_i+p_i})\leqslant \varphi(\varepsilon)<\varepsilon,$$ which is a contradiction. Hence $(x_n)$ is a Cauchy sequence. The rest of the proof follows from the proof given in Theorem~\ref{gkt2m}.
\end{proof}

\section{Fixed-point theorems in incomplete metric spaces}\label{sec5}

The following theorem is an analogue of Theorem 1 from~\cite{Ka69}. Note that in this theorem, we omit the completeness of the metric space and have two new conditions (iii) and (iv).

\begin{thm}\label{t2}
Let $(X,d)$, $|X|\geqslant 3$, be a metric space and let the mapping $T\colon X\to X$ satisfy the following four conditions:
\begin{itemize}
  \item [(i)] $T$ does not possess periodic points of prime period $2$.
  \item [(ii)] $T$ is a generalized \'CRR type mapping on $X$.
  \item [(iii)] $T$ is continuous at $x^*\in X$.
  \item [(iv)] There exists a point $x_0\in X$ such that the sequence of iterates $x_n=Tx_{n-1}$, $n=1,2,...$, has a subsequence $x_{n_k}$, converging to $x^*$.
\end{itemize}
Then $x^*$ is a fixed point of $T$. The number of fixed points is at most two.
\end{thm}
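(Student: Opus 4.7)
The plan is to mimic the proof of Theorem~\ref{t1}, replacing the role of completeness by condition (iv) and using the continuity hypothesis (iii) to verify the fixed-point equation directly.

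Starting from the point $x_0$ furnished by (iv), form the orbit $x_n = T^n x_0$. If some iterate $x_N$ is already a fixed point of $T$, then $x_n = x_N$ for all $n \geqslant N$; every subsequential limit of $(x_n)$ then equals $x_N$, so $x^* = x_N$ is the desired fixed point. Otherwise no $x_n$ is fixed, and one can replay verbatim the Cauchy argument from Theorem~\ref{t1}: by condition (i) the three consecutive iterates $x_{n-1}, x_n, x_{n+1}$ are pairwise distinct, and applying~(\ref{e00}) to this triple together with the triangle inequality yields
\[
d(x_{n+1}, x_{n+2}) \leqslant \gamma \max\{d(x_{n-1}, x_n), d(x_n, x_{n+1})\}, \qquad \gamma = \frac{2(2\alpha + \lambda)}{2-\lambda} \in [0,1),
\]
whence the geometric-type estimate~(\ref{e9}) shows that $(x_n)$ is Cauchy. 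This step is purely metric and does not invoke completeness of $X$, so it transfers unchanged to the present setting.

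Since $(x_n)$ is Cauchy and contains the convergent subsequence $x_{n_k} \to x^*$ from (iv), the whole sequence converges to $x^*$. Continuity of $T$ at $x^*$ (condition (iii)) then gives $Tx_n \to Tx^*$, i.e.\ $x_{n+1} \to Tx^*$; combining with $x_{n+1} \to x^*$ yields $Tx^* = x^*$ by uniqueness of limits. For the bound on the number of fixed points, suppose for contradiction that $T$ has three pairwise distinct fixed points $u, v, w$. Substituting these into~(\ref{e00}) with all three self-distances $d(u,Tu), d(v,Tv), d(w,Tw)$ equal to zero gives
\[
d(u,v) + d(v,w) + d(u,w) \leqslant \alpha\bigl(d(u,v) + d(v,w) + d(u,w)\bigr),
\]
which contradicts $\alpha < \tfrac{1}{2}$ (a consequence of $2\alpha + \tfrac{3\lambda}{2} < 1$), since the common sum is strictly positive.

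The principal obstacle, if any, is bookkeeping: verifying that the Cauchy portion of the proof of Theorem~\ref{t1} is entirely metric in nature and so genuinely survives dropping completeness. The remaining ingredients---upgrading Cauchy plus subsequential convergence to full convergence, and reading off the fixed-point equation from continuity at $x^*$ rather than from the contractive inequality---are then immediate.
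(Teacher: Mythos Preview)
Your proof is correct, but the finishing move differs from the paper's. Both arguments reuse the estimates from Theorem~\ref{t1} to control the orbit, but you push all the way to the Cauchy property, then use the general fact that a Cauchy sequence with a convergent subsequence converges in full, and finally invoke continuity at $x^*$ along the \emph{whole} sequence to get $x_{n+1}\to Tx^*$ and hence $Tx^*=x^*$. The paper instead argues by contradiction: from continuity at $x^*$ applied only to the subsequence one gets $x_{n_k+1}\to Tx^*$; if $Tx^*\neq x^*$, separating $x^*$ and $Tx^*$ by disjoint balls of radius $r<\tfrac13 d(x^*,Tx^*)$ forces $d(x_{n_i},x_{n_i+1})>r$ for large $i$, contradicting the consecutive-step decay $d(x_{n-1},x_n)\leqslant \gamma^{n/2-1}a\to 0$ from~(\ref{e9}). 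Your route is shorter and avoids the ball construction; the paper's route is slightly more economical in that it only needs $d(x_n,x_{n+1})\to 0$ rather than the full Cauchy estimate, though both follow from the same inequality. The ``at most two fixed points'' step is identical in substance to the paper's.
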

\begin{proof}
Since $T$ is continuous at $x^*$  and $x_{n_k} \to x^*$ we have $Tx_{n_k}=x_{n_{k}+1} \to Tx^*$. Note that $x_{n_{k}+1}$ is a subsequence of $x_n$ but not obligatory the subsequence of $x_{n_k}$. Suppose $x^*\neq Tx^*$. Consider two balls $B_1=B_1(x^*,r)$ and $B_2=B_2(Tx^*,r)$, where $r<\frac{1}{3}d(x^*, Tx^*)$. Consequently, there exists a positive integer $N$ such that $i>N$ implies
$$
x_{n_i} \in B_1 \, \text{ and } \, x_{n_i+1} \in B_2.
$$
Hence,
\begin{equation}\label{w4}
d(x_{n_i}, x_{n_i+1})>r
\end{equation}
for $i>N$.

If the sequence $x_n$ does not contain a fixed point of the mapping $T$, then we can apply considerations of Theorem~\ref{t1}.
By~(\ref{e9}) for $n=3,4,\ldots$ we have
$$
d(x_{n-1},x_n)\leqslant \gamma^{\frac{n}{2}-1}a,
$$
where $a=\max\{d(x_{0},x_{1}),d(x_{1},x_{2})\}$ and $\gamma=(2(2\alpha+\lambda))/(2-\lambda)\in [0,1)$.
Hence,
$$
d(x_{n_i},x_{n_i+1})\leqslant \gamma^{\frac{n_i+1}{2}-1}a.
$$
But the last expression approaches $0$ as $i\to \infty$ which contradicts to~(\ref{w4}). Hence, $Tx^*=x^*$.

The existence of at most two fixed points follows form the last paragraph of Theorem~\ref{t1}.
\end{proof}

In the following theorem, we suppose that $T$ is a generalized \'CRR type mapping, not defined on the entire space $X$ but on an everywhere dense subset of $X$. Additionally, we assume that $T$ is continuous on $X$ but not only at the point $x^*$. This can be compared with Theorem 2 from~\cite{Ka69}.

\begin{thm}\label{t3}
Let $(X,d)$, $|X|\geqslant 3$, be a metric space and let the mapping $T\colon X\to X$ be continuous. Suppose that
\begin{itemize}
  \item [(i)] $T$ does not possess periodic points of prime period $2$.
  \item [(ii)] $T$ is a generalized \'CRR type mapping on $(M,d)$, where $M$ is an everywhere dense subset of $X$.
  \item [(iii)] There exists a point $x_0\in X$ such that the sequence of iterates $x_n=Tx_{n-1}$, $n=1,2,...$, has a subsequence $x_{n_k}$, converging to $x^*$.
\end{itemize}
Then $x^*$ is a fixed point of $T$. The number of fixed points is at most two.
\end{thm}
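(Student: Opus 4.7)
The plan is to reduce Theorem~\ref{t3} to Theorem~\ref{t2} by using density of $M$ together with the continuity of $T$ on the whole space to extend the defining inequality~(\ref{e00}) from $M$ to $X$. Every quantity appearing in~(\ref{e00})---each pairwise distance and each $d(\cdot, T\cdot)$---is a composition of the continuous map $T$ with the continuous metric $d$, so it depends continuously on the three arguments. Therefore, once the inequality holds on a dense set of pairwise distinct triples, it should persist on the whole set of pairwise distinct triples by passage to the limit.

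First I would fix an arbitrary pairwise distinct triple $x, y, z \in X$ and, invoking density of $M$, choose sequences $x_m, y_m, z_m \in M$ with $x_m \to x$, $y_m \to y$, $z_m \to z$. Because $d(x,y), d(y,z), d(x,z) > 0$ and $d$ is continuous, the distances $d(x_m, y_m)$, $d(y_m, z_m)$, $d(x_m, z_m)$ are eventually positive, so the approximating triples are pairwise distinct for all sufficiently large $m$. By hypothesis~(ii), inequality~(\ref{e00}) is valid at $(x_m, y_m, z_m)$. Letting $m \to \infty$ and using continuity of $T$ (so $Tx_m \to Tx$, etc.) together with continuity of $d$, each term converges to the corresponding term at $(x,y,z)$, and the inequality is preserved in the limit. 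Hence $T$ satisfies~(\ref{e00}) for every pairwise distinct triple in $X$, i.e., $T$ is a generalized \'CRR type mapping on all of $X$ with the same constants $\alpha, \lambda$.

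At this point $T$ meets all four hypotheses of Theorem~\ref{t2}: condition (i) is inherited verbatim; condition (ii) has just been established globally; condition (iii), asking for continuity of $T$ at $x^*$, follows from continuity of $T$ throughout $X$; and condition (iv) is precisely the present condition (iii), giving an iteration sequence with a subsequence $x_{n_k} \to x^*$. Applying Theorem~\ref{t2} yields that $x^*$ is a fixed point of $T$ and that the number of fixed points is at most two. The only technical point worth singling out is the persistence of pairwise distinctness along approximating sequences, but this reduces to the elementary observation that pairwise distinctness is an open condition in $X^3$, which is immediate from the strict positivity of the three pairwise distances at the limit; the global continuity hypothesis on $T$ is exactly what makes this density argument work, and is the reason Theorem~\ref{t3} demands more regularity of $T$ than Theorem~\ref{t2} while demanding less of the domain where~(\ref{e00}) is assumed.
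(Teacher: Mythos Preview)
Your proof is correct and reaches the same endpoint as the paper---extending inequality~(\ref{e00}) from $M$ to all of $X$ and then invoking Theorem~\ref{t2}---but the route you take to the extension is cleaner and genuinely different from the paper's. The paper argues by cases according to how many of $x,y,z$ lie in $X\setminus M$: for each point outside $M$ it picks an approximating sequence from $M$, inserts extra triangle-inequality terms such as $d(Tx,Tz)\leqslant d(Tx,Tc_n)+d(Tc_n,Tz)$ and $d(c_n,Tc_n)\leqslant d(c_n,z)+d(z,Tz)+d(Tz,Tc_n)$, applies~(\ref{e00}) to the resulting triple in $M$, and then lets the error terms vanish. You bypass all of this by approximating all three points simultaneously by sequences in $M$, observing that pairwise distinctness is an open condition so the approximants are eventually admissible, and then passing to the limit termwise using only that each side of~(\ref{e00}) is a continuous function of $(x,y,z)$. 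Your argument is shorter and more conceptual; the paper's case-by-case computation has the minor advantage of making explicit exactly which triangle inequalities absorb the approximation error, but it is not needed for correctness.
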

\begin{proof}
The proof will follow from Theorem~\ref{t2}, if we can show that $T$ is a generalized \'CRR type mapping on $X$. Let $x, y, z$ be any three pairwise distinct points of $X$ such that  $x, y \in M$, $z\in X \setminus M$ and  let $(c_n)$ be a sequence in $M$ such that $c_n\to z$, $c_n\neq x$, $c_n\neq y$ for all $n$ and $c_i\neq c_j$, $i\neq j$. Then
$$
 d(Tx,Ty)+d(Ty,Tz)+d(Tx,Tz) \leqslant
$$
$$
d(Tx,Ty)+d(Ty,Tc_n)+d(Tc_n,Tz)+d(Tx,Tc_n)+d(Tc_n,Tz)
$$
$$
\leqslant \alpha(d(x,y)+d(y,c_n)+d(c_n,x))
$$
$$
+ \lambda(d(x,Tx)+d(y,Ty)+d(c_n,Tc_n))+2d(Tc_n,Tz)
$$
(using the inequality
\begin{equation}\label{s1}
  d(c_n,Tc_n)\leqslant d(c_n,z)+d(z,Tz)+d(Tz,Tc_n),
\end{equation}
we get)
$$
\leqslant \alpha(d(x,y)+d(y,c_n)+d(c_n,x))
$$
$$
+ \lambda(d(x,Tx)+d(y,Ty)+d(z,Tz))+\lambda d(c_n,z)+\lambda d(Tz,Tc_n)+2d(Tc_n,Tz).
$$
Letting $n\to \infty $ we get $d(c_n,z)\to 0$ and $d(Tc_n,Tz)\to 0$. Hence, inequality~(\ref{e00}) follows.

Let now  $x \in M$, $y, z\in X \setminus M$, and let $(b_n), (c_n)$ be sequences in $M$ such that $b_n\to y$ and $c_n\to z$. (Here and below we consider that the points $x, y, z$ and all elements of sequences converging to these points are pairwise distinct.) Then
$$
 d(Tx,Ty)+d(Ty,Tz)+d(Tx,Tz) \leqslant d(Tx,Tb_n)+d(Tb_n,Ty)
$$
$$
+d(Ty,Tb_n)+d(Tb_n,Tc_n)+d(Tc_n,Tz)+
d(Tx,Tc_n)+d(Tc_n,Tz)
$$
$$
\leqslant \alpha(d(x,b_n)+d(b_n,c_n)+d(c_n,x))
$$
$$
+ \lambda(d(x,Tx)+d(b_n,Tb_n)+d(c_n,Tc_n))+2d(Tb_n,Ty)
+2d(Tc_n,Tz)\leqslant
$$
(using the inequality
\begin{equation}\label{s2}
  d(b_n,Tb_n)\leqslant d(b_n,y)+d(y,Ty)+d(Ty,Tb_n)
\end{equation}
and inequality~(\ref{s1}) we get)
$$
\leqslant \alpha(d(x,b_n)+d(b_n,c_n)+d(c_n,x))
$$
$$
+ \lambda(d(x,Tx)+d(y,Ty)+d(z,Tz))
+2d(Tb_n,Ty)+2d(Tc_n,Tz)
$$
$$
+\lambda(d(b_n,y)+d(Ty,Tb_n)+d(c_n,z)+d(Tz,Tc_n)).
$$
Again, letting $n\to \infty $, we get inequality~(\ref{e00}).

Let now $x, y, z\in X \setminus M$, and let $(a_n), (b_n)$ and $(c_n)$ be sequences in $M$ such that $a_n\to x$, $b_n\to y$ and $c_n\to z$.
Then
$$
d(Tx,Ty)+d(Ty,Tz)+d(Tx,Tz)
$$
$$
\leqslant d(Tx,Ta_n)+d(Ta_n,Tb_n)+d(Tb_n,Ty)
$$
$$
+d(Ty,Tb_n)+d(Tb_n,Tc_n)+d(Tc_n,Tz)
$$
$$
+d(Tx,Ta_n)+d(Ta_n,Tc_n)+d(Tc_n,Tz)
$$
$$
\leqslant \alpha(d(a_n,b_n)+d(b_n,c_n)+d(c_n,a_n))
$$
$$
+ \lambda(d(a_n,Ta_n)+d(b_n,Tb_n)+d(c_n,Tc_n))
$$
$$
+2d(Ta_n,Tx)
+2d(Tb_n,Ty)
+2d(Tc_n,Tz)
$$
(using the inequality
\begin{equation*}
  d(a_n,Ta_n)\leqslant d(a_n,x)+d(x,Tx)+d(Tx,Ta_n)
\end{equation*}
and inequalities~(\ref{s1}) and~(\ref{s2}) we get)
$$
\leqslant \alpha(d(a_n,b_n)+d(b_n,c_n)+d(c_n,a_n))
$$
$$
+ \lambda(d(x,Tx)+d(y,Ty)+d(z,Tz))
$$
$$
+2d(Ta_n,Tx)
+2d(Tb_n,Ty)
+2d(Tc_n,Tz)
$$
$$
+\lambda(
d(a_n,x)+d(Tx,Ta_n)+
d(b_n,y)+d(Ty,Tb_n)+d(c_n,z)+d(Tz,Tc_n)).
$$
Again, letting $n\to \infty$, we get inequality~(\ref{e00}). Hence, $T$ is a generalized \'CRR type mapping on $X$, which completes the proof.
\end{proof}

\bmhead{Acknowledgements}

This work was partially supported by a grant from the Simons Foundation (Award 1160640, Presidential Discretionary-Ukraine Support Grants, E. Petrov).





\begin{thebibliography}{41}
\ifx \bisbn   \undefined \def \bisbn  #1{ISBN #1}\fi
\ifx \binits  \undefined \def \binits#1{#1}\fi
\ifx \bauthor  \undefined \def \bauthor#1{#1}\fi
\ifx \batitle  \undefined \def \batitle#1{#1}\fi
\ifx \bjtitle  \undefined \def \bjtitle#1{#1}\fi
\ifx \bvolume  \undefined \def \bvolume#1{\textbf{#1}}\fi
\ifx \byear  \undefined \def \byear#1{#1}\fi
\ifx \bissue  \undefined \def \bissue#1{#1}\fi
\ifx \bfpage  \undefined \def \bfpage#1{#1}\fi
\ifx \blpage  \undefined \def \blpage #1{#1}\fi
\ifx \burl  \undefined \def \burl#1{\textsf{#1}}\fi
\ifx \doiurl  \undefined \def \doiurl#1{\url{https://doi.org/#1}}\fi
\ifx \betal  \undefined \def \betal{\textit{et al.}}\fi
\ifx \binstitute  \undefined \def \binstitute#1{#1}\fi
\ifx \binstitutionaled  \undefined \def \binstitutionaled#1{#1}\fi
\ifx \bctitle  \undefined \def \bctitle#1{#1}\fi
\ifx \beditor  \undefined \def \beditor#1{#1}\fi
\ifx \bpublisher  \undefined \def \bpublisher#1{#1}\fi
\ifx \bbtitle  \undefined \def \bbtitle#1{#1}\fi
\ifx \bedition  \undefined \def \bedition#1{#1}\fi
\ifx \bseriesno  \undefined \def \bseriesno#1{#1}\fi
\ifx \blocation  \undefined \def \blocation#1{#1}\fi
\ifx \bsertitle  \undefined \def \bsertitle#1{#1}\fi
\ifx \bsnm \undefined \def \bsnm#1{#1}\fi
\ifx \bsuffix \undefined \def \bsuffix#1{#1}\fi
\ifx \bparticle \undefined \def \bparticle#1{#1}\fi
\ifx \barticle \undefined \def \barticle#1{#1}\fi
\bibcommenthead
\ifx \bconfdate \undefined \def \bconfdate #1{#1}\fi
\ifx \botherref \undefined \def \botherref #1{#1}\fi
\ifx \url \undefined \def \url#1{\textsf{#1}}\fi
\ifx \bchapter \undefined \def \bchapter#1{#1}\fi
\ifx \bbook \undefined \def \bbook#1{#1}\fi
\ifx \bcomment \undefined \def \bcomment#1{#1}\fi
\ifx \oauthor \undefined \def \oauthor#1{#1}\fi
\ifx \citeauthoryear \undefined \def \citeauthoryear#1{#1}\fi
\ifx \endbibitem  \undefined \def \endbibitem {}\fi
\ifx \bconflocation  \undefined \def \bconflocation#1{#1}\fi
\ifx \arxivurl  \undefined \def \arxivurl#1{\textsf{#1}}\fi
\csname PreBibitemsHook\endcsname

\bibitem[\protect\citeauthoryear{\'Ciri\'c}{1971}]{CB71}
\begin{barticle}
\bauthor{\bsnm{\'Ciri\'c}, \binits{L.B.}}:
\batitle{Generalized contractions and fixed-point theorems}.
\bjtitle{Publ. Inst. Math. (Beograd) (N.S.)}
\bvolume{12(26)},
\bfpage{19}--\blpage{26}
(\byear{1971})
\end{barticle}
\endbibitem

\bibitem[\protect\citeauthoryear{Reich}{1971}]{Re71}
\begin{barticle}
\bauthor{\bsnm{Reich}, \binits{S.}}:
\batitle{Some remarks concerning contraction mappings}.
\bjtitle{Canad. Math. Bull.}
\bvolume{14},
\bfpage{121}--\blpage{124}
(\byear{1971})
\end{barticle}
\endbibitem

\bibitem[\protect\citeauthoryear{Rus}{1971}]{Ru71}
\begin{barticle}
\bauthor{\bsnm{Rus}, \binits{I.A.}}:
\batitle{Some fixed point theorems in metric spaces}.
\bjtitle{Rend. Istit. Mat. Univ. Trieste}
\bvolume{3},
\bfpage{169}--\blpage{172}
(\byear{1971})
\end{barticle}
\endbibitem

\bibitem[\protect\citeauthoryear{Kannan}{1968}]{Ka68}
\begin{barticle}
\bauthor{\bsnm{Kannan}, \binits{R.}}:
\batitle{Some results on fixed points}.
\bjtitle{Bull. Calcutta Math. Soc.}
\bvolume{60},
\bfpage{71}--\blpage{76}
(\byear{1968})
\end{barticle}
\endbibitem

\bibitem[\protect\citeauthoryear{Reich}{1971}]{Re712}
\begin{barticle}
\bauthor{\bsnm{Reich}, \binits{S.}}:
\batitle{Kannan's fixed point theorem}.
\bjtitle{Boll. Un. Mat. Ital. (4)}
\bvolume{4},
\bfpage{1}--\blpage{11}
(\byear{1971})
\end{barticle}
\endbibitem

\bibitem[\protect\citeauthoryear{Bojor}{2012}]{Bo12}
\begin{barticle}
\bauthor{\bsnm{Bojor}, \binits{F.}}:
\batitle{Fixed point theorems for reich type contractions on metric spaces with
  a graph}.
\bjtitle{Nonlinear Anal. Theory Methods Appl.}
\bvolume{75},
\bfpage{3895}--\blpage{3901}
(\byear{2012})
\end{barticle}
\endbibitem

\bibitem[\protect\citeauthoryear{Kirk}{2003}]{Ki03}
\begin{barticle}
\bauthor{\bsnm{Kirk}, \binits{W.A.}}:
\batitle{Fixed points of asymptotic contractions}.
\bjtitle{J. Math. Anal. Appl.}
\bvolume{277}(\bissue{2}),
\bfpage{645}--\blpage{650}
(\byear{2003})
\end{barticle}
\endbibitem

\bibitem[\protect\citeauthoryear{\'{C}iri\'{c}}{1974}]{Ci74}
\begin{barticle}
\bauthor{\bsnm{\'{C}iri\'{c}}, \binits{L.B.}}:
\batitle{A generalization of {B}anach's contraction principle}.
\bjtitle{Proc. Amer. Math. Soc.}
\bvolume{45},
\bfpage{267}--\blpage{273}
(\byear{1974})
\end{barticle}
\endbibitem

\bibitem[\protect\citeauthoryear{Boyd and Wong}{1969}]{Bw69}
\begin{barticle}
\bauthor{\bsnm{Boyd}, \binits{D.W.}},
\bauthor{\bsnm{Wong}, \binits{J.S.W.}}:
\batitle{On nonlinear contractions}.
\bjtitle{Proc. Amer. Math. Soc.}
\bvolume{20},
\bfpage{458}--\blpage{464}
(\byear{1969})
\end{barticle}
\endbibitem

\bibitem[\protect\citeauthoryear{Meir and Keeler}{1969}]{MK69}
\begin{barticle}
\bauthor{\bsnm{Meir}, \binits{A.}},
\bauthor{\bsnm{Keeler}, \binits{E.}}:
\batitle{A theorem on contraction mappings}.
\bjtitle{J. Math. Anal. Appl.}
\bvolume{28},
\bfpage{326}--\blpage{329}
(\byear{1969})
\end{barticle}
\endbibitem

\bibitem[\protect\citeauthoryear{Rakotch}{1962}]{Ra62}
\begin{barticle}
\bauthor{\bsnm{Rakotch}, \binits{E.}}:
\batitle{A note on contractive mappings}.
\bjtitle{Proc. Amer. Math. Soc.}
\bvolume{13},
\bfpage{459}--\blpage{465}
(\byear{1962})
\end{barticle}
\endbibitem

\bibitem[\protect\citeauthoryear{Reich}{1972}]{Re72}
\begin{barticle}
\bauthor{\bsnm{Reich}, \binits{S.}}:
\batitle{Fixed points of contractive functions}.
\bjtitle{Boll. Un. Mat. Ital. (4)}
\bvolume{5},
\bfpage{26}--\blpage{42}
(\byear{1972})
\end{barticle}
\endbibitem

\bibitem[\protect\citeauthoryear{Rhoades}{1977}]{Rh77}
\begin{barticle}
\bauthor{\bsnm{Rhoades}, \binits{B.E.}}:
\batitle{A comparison of various definitions of contractive mappings}.
\bjtitle{Trans. Amer. Math. Soc.}
\bvolume{226},
\bfpage{257}--\blpage{290}
(\byear{1977})
\end{barticle}
\endbibitem

\bibitem[\protect\citeauthoryear{Rhoades}{1988}]{Rh88}
\begin{barticle}
\bauthor{\bsnm{Rhoades}, \binits{B.E.}}:
\batitle{Contractive definitions and continuity}.
\bjtitle{Contemporary Mathematics}
\bvolume{72},
\bfpage{233}--\blpage{245}
(\byear{1988})
\end{barticle}
\endbibitem

\bibitem[\protect\citeauthoryear{Wardowski}{2012}]{Wa12}
\begin{barticle}
\bauthor{\bsnm{Wardowski}, \binits{D.}}:
\batitle{Fixed points of a new type of contractive mappings in complete metric
  spaces}.
\bjtitle{Fixed Point Theory Appl.}
\bvolume{2012},
\bfpage{1}--\blpage{6}
(\byear{2012}).
\bcomment{Paper No. 94}
\end{barticle}
\endbibitem

\bibitem[\protect\citeauthoryear{Berinde et~al.}{2023}]{BPR23}
\begin{barticle}
\bauthor{\bsnm{Berinde}, \binits{V.}},
\bauthor{\bsnm{Petru\c{s}el}, \binits{A.}},
\bauthor{\bsnm{Rus}, \binits{I.A.}}:
\batitle{Remarks on the terminology of the mappings in fixed point iterative
  methods in metric spaces}.
\bjtitle{Fixed Point Theory}
\bvolume{24}(\bissue{2}),
\bfpage{525}--\blpage{540}
(\byear{2023})
\end{barticle}
\endbibitem

\bibitem[\protect\citeauthoryear{Subrahmanyam}{2018}]{Su18}
\begin{bbook}
\bauthor{\bsnm{Subrahmanyam}, \binits{P.V.}}:
\bbtitle{Elementary Fixed Point Theorems}.
\bsertitle{Forum for Interdisciplinary Mathematics}.
\bpublisher{Springer},
\blocation{Singapore}
(\byear{2018})
\end{bbook}
\endbibitem

\bibitem[\protect\citeauthoryear{Proinov}{2020}]{Pr20}
\begin{barticle}
\bauthor{\bsnm{Proinov}, \binits{P.D.}}:
\batitle{Fixed point theorems for generalized contractive mappings in metric
  spaces}.
\bjtitle{J. Fixed Point Theory Appl.}
\bvolume{22},
\bfpage{1}--\blpage{27}
(\byear{2020}).
\bcomment{Paper No. 21}
\end{barticle}
\endbibitem

\bibitem[\protect\citeauthoryear{Popescu}{2021}]{Po21}
\begin{barticle}
\bauthor{\bsnm{Popescu}, \binits{O.}}:
\batitle{Some remarks on the paper ``{F}ixed point theorems for generalized
  contractive mappings in metric spaces''}.
\bjtitle{J. Fixed Point Theory Appl.}
\bvolume{23},
\bfpage{1}--\blpage{10}
(\byear{2021}).
\bcomment{Paper No. 72}
\end{barticle}
\endbibitem

\bibitem[\protect\citeauthoryear{Berinde}{2004}]{Be04}
\begin{barticle}
\bauthor{\bsnm{Berinde}, \binits{V.}}:
\batitle{Approximating fixed points of weak contractions using the {P}icard
  iteration}.
\bjtitle{Nonlinear Anal. Forum}
\bvolume{9}(\bissue{1}),
\bfpage{43}--\blpage{53}
(\byear{2004})
\end{barticle}
\endbibitem

\bibitem[\protect\citeauthoryear{Berinde}{2007}]{Be07}
\begin{bbook}
\bauthor{\bsnm{Berinde}, \binits{V.}}:
\bbtitle{Iterative Approximation of Fixed Points},
\bedition{2}nd edn.
\bsertitle{Lecture Notes in Mathematics},
vol. \bseriesno{1912}.
\bpublisher{Springer},
\blocation{Berlin}
(\byear{2007})
\end{bbook}
\endbibitem

\bibitem[\protect\citeauthoryear{Cho et~al.}{2021}]{CJM21}
\begin{bbook}
\beditor{\bsnm{Cho}, \binits{Y.J.}},
\beditor{\bsnm{Jleli}, \binits{M.}},
\beditor{\bsnm{Mursaleen}, \binits{M.}},
\beditor{\bsnm{Samet}, \binits{B.}},
\beditor{\bsnm{Vetro}, \binits{C.}} (eds.):
\bbtitle{Advances in Metric Fixed Point Theory and Applications}.
\bpublisher{Springer},
\blocation{Singapore}
(\byear{2021})
\end{bbook}
\endbibitem

\bibitem[\protect\citeauthoryear{Di~Bari and Vetro}{2012}]{DBC12}
\begin{barticle}
\bauthor{\bsnm{Di~Bari}, \binits{C.}},
\bauthor{\bsnm{Vetro}, \binits{P.}}:
\batitle{Common fixed points in generalized metric spaces}.
\bjtitle{Appl. Math. Comput.}
\bvolume{218}(\bissue{13}),
\bfpage{7322}--\blpage{7325}
(\byear{2012})
\end{barticle}
\endbibitem

\bibitem[\protect\citeauthoryear{Branciari}{2000}]{Ba00}
\begin{barticle}
\bauthor{\bsnm{Branciari}, \binits{A.}}:
\batitle{A fixed point theorem of {B}anach-{C}accioppoli type on a class of
  generalized metric spaces}.
\bjtitle{Publ. Math. Debrecen}
\bvolume{57}(\bissue{1-2}),
\bfpage{31}--\blpage{37}
(\byear{2000})
\end{barticle}
\endbibitem

\bibitem[\protect\citeauthoryear{Khamsi et~al.}{1990}]{KKR90}
\begin{barticle}
\bauthor{\bsnm{Khamsi}, \binits{M.A.}},
\bauthor{\bsnm{Kozlowski}, \binits{W.M.}},
\bauthor{\bsnm{Reich}, \binits{S.}}:
\batitle{Fixed point theory in modular function spaces}.
\bjtitle{Nonlinear Anal.}
\bvolume{14}(\bissue{11}),
\bfpage{935}--\blpage{953}
(\byear{1990})
\end{barticle}
\endbibitem

\bibitem[\protect\citeauthoryear{Sarma et~al.}{2009}]{SRR09}
\begin{barticle}
\bauthor{\bsnm{Sarma}, \binits{I.R.}},
\bauthor{\bsnm{Rao}, \binits{J.M.}},
\bauthor{\bsnm{Rao}, \binits{S.S.}}:
\batitle{Contractions over generalized metric spaces}.
\bjtitle{J. Nonlinear Sci. Appl.}
\bvolume{2}(\bissue{3}),
\bfpage{180}--\blpage{182}
(\byear{2009})
\end{barticle}
\endbibitem

\bibitem[\protect\citeauthoryear{Turinici}{2012}]{Tu12}
\begin{barticle}
\bauthor{\bsnm{Turinici}, \binits{M.}}:
\batitle{Functional contractions in local {B}ranciari metric spaces}.
\bjtitle{ROMAI J.}
\bvolume{8}(\bissue{2}),
\bfpage{189}--\blpage{199}
(\byear{2012})
\end{barticle}
\endbibitem

\bibitem[\protect\citeauthoryear{Jankovi\'{c} et~al.}{2011}]{JKR11}
\begin{barticle}
\bauthor{\bsnm{Jankovi\'{c}}, \binits{S.}},
\bauthor{\bsnm{Kadelburg}, \binits{Z.}},
\bauthor{\bsnm{Radenovi\'{c}}, \binits{S.}}:
\batitle{On cone metric spaces: a survey}.
\bjtitle{Nonlinear Anal.}
\bvolume{74}(\bissue{7}),
\bfpage{2591}--\blpage{2601}
(\byear{2011})
\end{barticle}
\endbibitem

\bibitem[\protect\citeauthoryear{Connell}{1959}]{Co59}
\begin{barticle}
\bauthor{\bsnm{Connell}, \binits{E.H.}}:
\batitle{Properties of fixed point spaces}.
\bjtitle{Proc. Amer. Math. Soc.}
\bvolume{10},
\bfpage{974}--\blpage{979}
(\byear{1959})
\end{barticle}
\endbibitem

\bibitem[\protect\citeauthoryear{Karahan and Isik}{2019}]{KI19}
\begin{barticle}
\bauthor{\bsnm{Karahan}, \binits{I.}},
\bauthor{\bsnm{Isik}, \binits{I.}}:
\batitle{Generalization of {B}anach, {K}annan and {C}iric fixed point theorems
  in {$b_v(s)$} metric spaces}.
\bjtitle{Politehn. Univ. Bucharest Sci. Bull. Ser. A Appl. Math. Phys.}
\bvolume{81}(\bissue{1}),
\bfpage{73}--\blpage{80}
(\byear{2019})
\end{barticle}
\endbibitem

\bibitem[\protect\citeauthoryear{Agarwal et~al.}{2018}]{AJS18}
\begin{bbook}
\bauthor{\bsnm{Agarwal}, \binits{P.}},
\bauthor{\bsnm{Jleli}, \binits{M.}},
\bauthor{\bsnm{Samet}, \binits{B.}}:
\bbtitle{Fixed Point Theory in Metric Spaces}.
\bpublisher{Springer},
\blocation{Singapore}
(\byear{2018}).
\bcomment{Recent advances and applications}
\end{bbook}
\endbibitem

\bibitem[\protect\citeauthoryear{Nadler}{1969}]{Na69}
\begin{barticle}
\bauthor{\bsnm{Nadler}, \binits{S.B.} \bsuffix{Jr.}}:
\batitle{Multi-valued contraction mappings}.
\bjtitle{Pacific J. Math.}
\bvolume{30},
\bfpage{475}--\blpage{488}
(\byear{1969})
\end{barticle}
\endbibitem

\bibitem[\protect\citeauthoryear{Laz\u{a}r et~al.}{2010}]{L2010}
\begin{barticle}
\bauthor{\bsnm{Laz\u{a}r}, \binits{T.}},
\bauthor{\bsnm{Mo\c{t}}, \binits{G.}},
\bauthor{\bsnm{Petru\c{s}el}, \binits{G.}}, \betal:
\batitle{The theory of {R}eich's fixed point theorem for multivalued
  operators}.
\bjtitle{Fixed Point Theory Appl}
\bvolume{2010},
\bfpage{1}--\blpage{10}
(\byear{2010}).
\bcomment{Article ID 178421}
\end{barticle}
\endbibitem

\bibitem[\protect\citeauthoryear{Nakanishi and Suzuki}{2010}]{NS10}
\begin{bchapter}
\bauthor{\bsnm{Nakanishi}, \binits{M.}},
\bauthor{\bsnm{Suzuki}, \binits{T.}}:
\bctitle{A fixed point theorem for some {K}annan-type set-valued mappings}.
In: \bbtitle{Proc. 6th International Conference on Nonlinear Analysis and
  Convex Analysis (NACA 2009), Tokyo, Japan, March 27--31, 2009},
pp. \bfpage{249}--\blpage{255}.
\bpublisher{Yokohama Publ.},
\blocation{Yokohama}
(\byear{2010})
\end{bchapter}
\endbibitem

\bibitem[\protect\citeauthoryear{Damjanovi\'{c} and Dori\'{c}}{2011}]{DD11}
\begin{barticle}
\bauthor{\bsnm{Damjanovi\'{c}}, \binits{B.}},
\bauthor{\bsnm{Dori\'{c}}, \binits{D.}}:
\batitle{Multivalued generalizations of the {K}annan fixed point theorem}.
\bjtitle{Filomat}
\bvolume{25}(\bissue{1}),
\bfpage{125}--\blpage{131}
(\byear{2011})
\end{barticle}
\endbibitem

\bibitem[\protect\citeauthoryear{Petrov}{2023}]{P23}
\begin{botherref}
\oauthor{\bsnm{Petrov}, \binits{E.}}:
Fixed point theorem for mappings contracting perimeters of triangles.
J. Fixed Point Theory Appl.
\textbf{25}
(2023).
Paper No. 74
\end{botherref}
\endbibitem

\bibitem[\protect\citeauthoryear{Petrov and Bisht}{2023}]{PB23}
\begin{botherref}
\oauthor{\bsnm{Petrov}, \binits{E.}},
\oauthor{\bsnm{Bisht}, \binits{R.K.}}:
Fixed point theorem for generalized {K}annan type mappings.
https://arxiv.org/abs/2308.05419
(2023).
(Submitted)
\end{botherref}
\endbibitem

\bibitem[\protect\citeauthoryear{Kannan}{1969}]{Ka69}
\begin{barticle}
\bauthor{\bsnm{Kannan}, \binits{R.}}:
\batitle{Some results on fixed points. {II}}.
\bjtitle{Amer. Math. Monthly}
\bvolume{76},
\bfpage{405}--\blpage{408}
(\byear{1969})
\end{barticle}
\endbibitem

\bibitem[\protect\citeauthoryear{Devaney}{2003}]{De22}
\begin{bbook}
\bauthor{\bsnm{Devaney}, \binits{R.L.}}:
\bbtitle{An Introduction to Chaotic Dynamical Systems}.
\bsertitle{Studies in Nonlinearity}.
\bpublisher{Westview Press},
\blocation{Boulder, CO}
(\byear{2003})
\end{bbook}
\endbibitem

\bibitem[\protect\citeauthoryear{Browder and Petryshyn}{1966}]{BP66}
\begin{barticle}
\bauthor{\bsnm{Browder}, \binits{F.E.}},
\bauthor{\bsnm{Petryshyn}, \binits{W.V.}}:
\batitle{The solution by iteration of nonlinear functional equations in
  {B}anach spaces}.
\bjtitle{Bull. Amer. Math. Soc.}
\bvolume{72},
\bfpage{571}--\blpage{575}
(\byear{1966})
\end{barticle}
\endbibitem

\bibitem[\protect\citeauthoryear{Bisht}{2023}]{B23}
\begin{barticle}
\bauthor{\bsnm{Bisht}, \binits{R.K.}}:
\batitle{An overview of the emergence of weaker continuity notions, various
  classes of contractive mappings and related fixed point theorems}.
\bjtitle{J. Fixed Point Theory Appl.}
\bvolume{25}(\bissue{1}),
\bfpage{1}--\blpage{29}
(\byear{2023}).
\bcomment{Paper No. 11}
\end{barticle}
\endbibitem

\end{thebibliography}
\end{document}